\documentclass[a4paper,12pt,openbib,reqno]{amsart}
\usepackage{amsmath,amsfonts,amssymb}
\usepackage{verbatim}
\usepackage{enumerate}
\usepackage{url}
\usepackage[colorlinks]{hyperref}
\usepackage[latin1]{inputenc}
\usepackage[english]{babel}
\usepackage{tikz}
\usepackage[margin=2.5cm]{geometry}
\def\N{\mathbb N}

\def\Z{\mathbb Z}
\def\C{\mathbb C}
\def\Q{\mathbb Q}

\def \F{\mathbb F}

\def\Fx{\mathbb{F}_q[x]}
\def\Fq{\mathbb{F}_q}
\def\Fp{\mathbb{F}_p}
\def\Fqn{\mathbb{F}_{q^n}}
\def\Fqk{\mathbb{F}_{q^k}}

\def\Fpk{\mathbb{F}_{p^k}}
\DeclareMathOperator{\tr}{Tr}

\theoremstyle{plain}
\newtheorem{theorem}{Theorem}[section]
\newtheorem{lemma}[theorem]{Lemma}
\newtheorem{definition}[theorem]{Definition}
\newtheorem{corollary}[theorem]{Corollary}

\newtheorem{remark}[theorem]{Remark}
\newtheorem{example}[theorem]{Example}

\def\qed{\hfill\hbox{$\square$}}

\theoremstyle{definition}

\author[J. Alves Oliveira]{Jos\'e Alves Oliveira}
\author[F. E. Brochero Mart\'{\i}nez]{F. E. Brochero Mart\'{\i}nez}
\address{
	Departamento de Matem\'{a}tica\\
	Universidade Federal de Minas Gerais\\
	UFMG\\
	Belo Horizonte, MG\\
	31270-901\\
	Brazil\\
}
\email{jose-alvesoliveira@hotmail.com}
\email{fbrocher@mat.ufmg.br }

\title{Permutation Binomials over Finite Fields}
\keywords{Permutation Polynomial,  Algebraic Curves, Hasse-Weil's bound, caracter sum, index}

\date{\today
}

\subjclass[2000]{ }

\subjclass[2010]{12E20 (primary) and 11T06(secondary)}

\begin{document}
	
	\begin{abstract}
	Let $\Fq$ denote the finite field with $q$ elements. In this paper we use the relationship between suitable polynomials and number of rational points on algebraic curves to give the exact number of elements $a\in\Fq$ for which the binomial $x^n(x^{\frac{q-1}{r}}+a)$ is a permutation polynomial in the cases $r=2$ and $r=3$.
	\end{abstract}
	
	\maketitle
	
	\section{Introduction}
	Let $\Fq$ denote the finite field with $q$ elements. A polynomial $f(x)\in\Fx$ is called a \textit{permutation polynomial} over $\Fq$ if the map $a\mapsto f(a)$ permutes the elements of $\Fq$. Important early contributions to the general theory are contained in Hermite \cite{Her} and Dickson \cite{Dic}. 
	In recent times, the study of permutation polynomial has intensified due to their applications in cryptography and coding theory \cite{apl1,apl3,apl4,apl2}, resulting in the emergence of several new classes of these types of polynomials. 
	
	Linear polynomials $ax+b$, with $a\in\Fq^*$, are the simplest class of permutation polynomials of $\Fq$. Other simple family of permutation polynomials are the monomials; The monomial $x^k$ permutes $\Fq$ if and only if $\gcd(k,q-1)=1$. Therefore, we have a simple condition in order to determine if a monomial is a permutation polynomial. 
Now, characterizing permutation binomials is harder and more interesting. 
These polynomials have simple form and they are important because their easy computability. Characterization of permutation binomials of the form $x(x^{\frac{q-1}{2}}+a)$ was made in Niederreiter and Robinson \cite{Rob} and others particular classes of binomials have been study by several other authors (e.g. \cite{Akb, ref2, Kim, ref3, Smal1, Wan10, Wang, ref1}).
	
	The existence of permutation polynomials and quantity of them with some characteristic have been extensively explored recently. For instance, in the case of monomials, there are $\varphi(q-1)$ values of $k$ for which $x^k$ permutes $\Fq$. 
The existence of binomials of the forms $x(x^\frac{q-1}{2}+a)$ and $x(x^\frac{q-1}{3}+a)$ for sufficiently large $q$  was shown by Carlitz \cite{Carl}. In \cite{Bhat}, given $t$ odd, the authors calculate the number of elements $a\in\F_{2^t}$ for which the binomial of the form $x(x^{\frac{2^{n}-1}{3}}+a)$ permutes $\F_{2^n}$, where $n=2^it$. 
	Finding the exact number of permutation binomials of the form $x^n(x^{\frac{q-1}{m}}+a)$ is still an open problem. Advances in the solution of this problem can be find in \cite{DWan}, where the author proved that  the number of such permutation binomials of the form $f_a(x)  = x(x^{(q-1)/m}+a)$ is estimated to $\frac{m!}{m^m}q+{O}(\sqrt{q})$.
	
	There is an interesting connection between permutation binomials and algebraic curves and this fact was used by Masuda and Zieve to refine the estimate for the number of permutation binomials, as we can see in the following theorem.
	
	\begin{theorem}\cite[Theorem $3.1$]{MaZi1}\label{teoMasuda} Let $n,k>0$  be integers with $\gcd(n,k,q-1)=1$, and suppose $q\ge 4$.  If $\gcd(k,q-1)>2q(\log\log q)/\log q$, then there exists $a\in \F_q^*$ such that $x^n(x^k+a)$ permutes $\F_q$. Further, letting $T$ denote the number of $a\in \F_q$ for which $x^n(x^k+a)$ permute $\F_q$, and writing $r:=(q-1)/\gcd(k,q-1)$, we have
		$$ 
		\frac{r !}{r^r} \left(q+1-\sqrt{q}M_r-(r+1) r^{r-1}\right) \leq T \leq \frac{r !}{r^r}\left(q+1+\sqrt{q}M_r\right) ,
		$$
		where $M_r:=r^{r+1}-2 r^{r}-r^{r-1}+2$.
	\end{theorem}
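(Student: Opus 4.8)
The plan is to translate the permutation property of $f_a(x)=x^n(x^k+a)$ into a statement about rational points on an associated algebraic curve, and then apply the Hasse-Weil bound. The starting observation is a standard criterion: since $\gcd(n,k,q-1)=1$, the polynomial $f_a$ permutes $\F_q$ if and only if a certain reduced polynomial permutes the group of $r$-th powers (or equivalently the cyclic quotient of order $r$), where $r=(q-1)/\gcd(k,q-1)$. Writing $d=\gcd(k,q-1)$, one reduces $f_a$ modulo the relation $y^d=x$ and the condition $y^{q-1}=1$ to an induced map on $\mu_r$, the group of $r$-th roots of unity. First I would make this reduction precise, so that counting the admissible $a$ becomes counting values of $a$ for which the induced map $g_a$ on $\mu_r$ is a bijection.

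Next I would set up the character-sum / curve dictionary. The number of $a\in\F_q^*$ for which $g_a$ fails to be a permutation can be expressed through the number of collisions $g_a(u)=g_a(v)$ with $u\neq v$ in $\mu_r$; each such collision condition, say $u^n(u^k+a)=v^n(v^k+a)$, defines (after clearing denominators and using $u,v\in\mu_r$) a curve whose affine equation is linear in $a$, so that for each pair $(u,v)$ the bad values of $a$ are controlled. The core idea, following Masuda-Zieve, is to introduce the curve
\[
C:\quad \frac{x^n(x^k+a)-y^n(y^k+a)}{x-y}=0
\]
and count its $\F_q$-points with $x\neq y$; by inclusion-exclusion over the $\binom{r}{2}$ pairs one extracts the count $T$. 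The main quantity to estimate is the number of $\F_q$-rational points on this curve, and this is exactly where Hasse-Weil enters: if $N$ denotes that number and the curve has genus $g$, then $|N-(q+1)|\le 2g\sqrt q$, and the genus is bounded in terms of $r$.

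The heart of the argument is therefore a genus computation (or a bound on the degree and number of singular points) for the relevant curves, together with an absolute-irreducibility check so that Hasse-Weil applies componentwise. I would bound the genus via the degree of the defining equation, which is of size $O(r)$, producing a factor $\sqrt q$ times a polynomial in $r$; tracking the constants carefully yields the explicit quantity $M_r=r^{r+1}-2r^r-r^{r-1}+2$ in the statement, and the prefactor $r!/r^r$ arises as the ``expected'' probability that a random map on an $r$-element structure is a bijection (the leading term $\frac{r!}{r^r}q$ matching the heuristic count). The error terms $(r+1)r^{r-1}$ on the lower side account for the contribution of points at infinity and singular points that must be subtracted when passing from point counts to the exact value of $T$.

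The hard part will be the genus estimate and the verification of absolute irreducibility of the components of $C$: one must rule out degenerate factorizations for all admissible $(n,k)$ with $\gcd(n,k,q-1)=1$, and then convert the point count into the exact combinatorial count $T$ without losing control of the lower-order terms. Getting the precise polynomial $M_r$ rather than a crude $O(\sqrt q)$ requires bookkeeping of exactly how many pairs $(u,v)$ contribute and how the Hasse-Weil deviation accumulates across the inclusion-exclusion, which is the most delicate and error-prone step of the whole argument.
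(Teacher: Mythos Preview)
This theorem is not proved in the present paper at all: it is quoted verbatim as Theorem~3.1 of Masuda and Zieve \cite{MaZi1}, and the only thing the paper says about its proof is the single sentence ``In order to prove this result, they used the Hasse-Weil's bound to estimate the number of $\F_q$-rational points on a curve.'' There is thus no proof here to compare your proposal against.

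That said, your sketch is in the right general spirit (reduce via the Wan--Lidl/Zieve criterion to the induced map $g_a$ on $\mu_r$, then count using curves and Hasse--Weil), but as written it has real gaps and some wrong moves. The object you write, $\frac{x^n(x^k+a)-y^n(y^k+a)}{x-y}=0$, has three free variables, so it is a surface, not a curve; and even for fixed $a$ it detects collisions of $f_a$ on $\F_q$, not of the induced map $g_a$ on $\mu_r$, which is what matters after the reduction. More importantly, the combinatorics that produce the prefactor $r!/r^r$ and the constant $M_r=r^{r+1}-2r^r-r^{r-1}+2$ do not come from an inclusion--exclusion over $\binom{r}{2}$ pairs of collisions: they arise because one stratifies over all $r^r$ possible value-tuples $(g_a(\zeta_1),\dots,g_a(\zeta_r))$, obtaining for each tuple a genuine curve in the $a$-line (plus auxiliary variables) to which Hasse--Weil is applied, and then sums over the $r!$ tuples that correspond to bijections. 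Your pairwise scheme would at best give a cruder bound with different constants. If you want the precise argument, you need to consult \cite{MaZi1} directly; the present paper does not supply it.
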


	In order to prove this result, they used the Hasse-Weil's bound to estimate the number of $\Fq$-rational points on a curve. The link between polynomials over finite fields and algebraic curves has been extensively used in counting results. 
	In this article we also use this relationship, i.e. we use the connection between permutation binomials and algebraic curves to calculate the exact number of binomials of the forms $x^n(x^{\frac{q-1}{2}}+a)$ and $x^n(x^{\frac{q-1}{3}}+a)$.
In this paper we  relate the number of permutation binomials of the form $x^n(x^{\frac{q-1}{2}}+a)$ to points on an algebraic curve of degree $2$ and   the number of permutation binomials of the form $x^n(x^{\frac{q-1}{3}}+a)$ to rational points on an elliptic curve. Using this relationship, we calculate the exact number of permutation binomial of the form $x^n(x^{\frac{q-1}{2}}+a)$  (Theorem  \ref{item5}) and permutation binomial of the form $x^n(x^{\frac{q-1}{3}}+a)$ (Theorem \ref{item772}).
	\section{ Preliminaries}
	
	Throughout this article, $\F_q$ denotes the finite field with $q$ elements, where $q$ is a power of a prime $p$. 
For each non identically zero polynomial $f(x)\in \F_q[x]$, let define the index of $f$ as the smaller positive integer $m$ such such that $f$ can be write as $f(x)=x^r h(x^{(q-1)/m})+b$, where $b\in \F_q$ and $h(0)\ne 0$.   Many criteria for determining whether  a polynomial is a permutation polynomial depends to the numbers $r$ and $m$. In this line, we use extensively the following result, proved by Wan and Lidl  \cite[Theorem 1.2]{WaLi}. 
	\begin{theorem}\label{teoPerm}
		 Let $f(x):=x^r h(x^{\frac{q-1}{m}})+b\in \F_q[x]$ be a polynomial of index $m$ and $\alpha$ be a primitive element in $\Fq$. Then, $f(x)$ is a permutation polynomial of $\Fq$ if and only if the following conditions are satisfied:
		\begin{enumerate}[(i)]
			\item $\gcd\big(r, \frac{q-1}{m}\big)=1$;
			\item $h(\alpha^{i\frac{q-1}{m}})\neq 0,\text{ for all }i\text{ with }\ 0\leq i<m$;
			\item $f(\alpha^i)^{\frac{q-1}{m}}\neq f(\alpha^j)^{\frac{q-1}{m}},\text{ for all }i,j \text{ with }\ 0\leq i<j<m$.
		\end{enumerate}
	\end{theorem}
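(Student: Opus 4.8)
The plan is to transfer the question to the multiplicative group $\F_q^*$ and to read off the three conditions from the action of $f$ on the cosets of the group of $(q-1)/m$-th roots of unity. Write $s:=(q-1)/m$. Since $y\mapsto y-b$ is a bijection of $\F_q$, the polynomial $f$ permutes $\F_q$ if and only if $x^r h(x^{s})$ does, so I first reduce to $b=0$ and put $f(x)=x^r h(x^{s})$, which satisfies $f(0)=0$. Consequently $f$ permutes $\F_q$ exactly when it maps $\F_q^*$ bijectively onto $\F_q^*$, and in particular $f$ must have no root in $\F_q^*$. As $x\mapsto x^{s}$ maps $\F_q^*$ onto the subgroup $\{\alpha^{is}:0\le i<m\}$ of order $m$, the factor $h(x^{s})$ is nonzero on all of $\F_q^*$ if and only if $h(\alpha^{is})\ne 0$ for every $0\le i<m$; this is condition (ii).

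Next I would write $\F_q^*$ as the disjoint union of the $m$ cosets $\alpha^i U$, $0\le i<m$, of the group $U:=\langle\alpha^{m}\rangle$ of $s$-th roots of unity, which has order $s$. On $\alpha^i U$ the value $x^{s}$ is the constant $\alpha^{is}$, so there $f(x)=c_i x^r$ with $c_i:=h(\alpha^{is})\ne 0$; explicitly $f(\alpha^i u)=c_i\alpha^{ir}u^r$ for $u\in U$, so the image of this coset is $c_i\alpha^{ir}U^r$. Because $u\mapsto u^r$ on the cyclic group $U$ is $\gcd(r,s)$-to-one, the restriction of $f$ to each coset is injective if and only if $\gcd(r,s)=\gcd\!\big(r,(q-1)/m\big)=1$, which is condition (i). Granting (i) we have $U^r=U$, whence $f$ carries each $\alpha^i U$ bijectively onto the full coset $f(\alpha^i)\,U$.

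It then remains to determine when these $m$ image cosets are pairwise distinct, which is exactly injectivity (hence bijectivity) of $f$ on $\F_q^*$. Now $f(\alpha^i)\,U=f(\alpha^j)\,U$ if and only if $f(\alpha^i)/f(\alpha^j)\in U$, and an element of $\F_q^*$ lies in $U$ precisely when its $s$-th power is $1$; therefore the image cosets are pairwise distinct if and only if $f(\alpha^i)^{s}\ne f(\alpha^j)^{s}$ for all $0\le i<j<m$, which is condition (iii). Combining the three steps yields the claimed characterization.

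I expect the decisive step to be the last one, namely recognizing $U$ as the kernel of the $s$-th power map so that the geometric statement ``the image cosets are distinct'' turns into the arithmetic test on $f(\alpha^i)^{(q-1)/m}$; the intermediate claims are routine facts about cyclic groups. The only point requiring care is to carry out the reduction to $b=0$ before applying the coset criterion, since membership in $U$ is a multiplicative condition and would not survive an additive shift by $b$.
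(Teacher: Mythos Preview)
The paper does not supply its own proof of this theorem; it is quoted from Wan and Lidl and used as a black box throughout, so there is no argument in the paper to compare against. Your coset argument is the standard proof of the Wan--Lidl criterion and is correct as written.

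One point deserves comment. You reduce to $b=0$ by replacing $f$ with $g(x):=x^r h(x^{s})$ and then verify condition~(iii) for this $g$, namely $g(\alpha^i)^{s}\ne g(\alpha^j)^{s}$. The theorem as printed, however, states condition~(iii) for the \emph{original} $f=g+b$. For $b\ne 0$ the two formulations are not equivalent in general: adding $b$ does not respect the multiplicative coset structure of $U=\langle\alpha^m\rangle$, so ``$g(\alpha^i)$ and $g(\alpha^j)$ lie in distinct cosets of $U$'' is not the same as ``$g(\alpha^i)+b$ and $g(\alpha^j)+b$ lie in distinct cosets of $U$''. What you have actually proved is the $b=0$ version, which is the form in which Wan and Lidl state the result and also the only form the paper ever applies (all uses are to binomials $x^n(x^{(q-1)/m}+a)$ with constant term zero). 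Your closing caveat about the additive shift already flags this tension; it would be worth saying explicitly that the literal statement with $f$ in~(iii) is only being claimed, and only needed, for $b=0$.
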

An essentially equivalent criterion for $f$ to permute $\F_q$ was given by Zieve \cite[Lemma 2.1]{Zie} and   Park and Lee \cite[Theorem 2.3]{PaLe}.
Note that, if the index of $f$ is small then this theorem gives an easy way to decide whether a polynomial permutes $\Fq$ and we explore this fact in the main results. However,  before we prove our principal results, we need of the following well known technical lemmas.

	\begin{lemma}\cite[Theorem 5.4]{LiNi}\label{item14} If $\chi$ is a nontrivial multiplicative character of $\Fq^*$, then
		\begin{center}
			$\sum\limits_{g\in\Fq^*}{\chi(g)} = 0.$
		\end{center}
	\end{lemma}

	\begin{lemma}\cite[Lemma 7.3]{LiNi}\label{item75}
		Let $m$ be a positive integer. We have
		
		$$\sum_{a\in\Fq}a^m=\begin{cases}
		0,\text{ if }(q-1)\nmid m\text{ or }m=0;\\
		-1\text{ if }(q-1)\mid m\text{ and }m\neq0,\\
		\end{cases}$$
		where $0^0:=1$.
	\end{lemma}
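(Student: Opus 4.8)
The plan is to split the sum according to the three regimes singled out by the hypotheses and to handle the only genuinely nontrivial one by a symmetry argument. First I would dispose of the degenerate case $m=0$. With the stated convention $0^0:=1$ every summand equals $1$, so $\sum_{a\in\Fq}a^m=\sum_{a\in\Fq}1=q\cdot 1$; since $q$ is a power of the characteristic $p$, this is $0$ in $\Fq$, giving the claimed value.

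Next I would treat the case $m\neq 0$ with $(q-1)\mid m$. For $a\in\Fq^*$ one has $a^{q-1}=1$, whence $a^m=(a^{q-1})^{m/(q-1)}=1$, while $0^m=0$ because $m\neq 0$. The sum therefore collapses to $\sum_{a\in\Fq^*}1=(q-1)\cdot 1=q-1$, which equals $-1$ in $\Fq$. Both of these cases are mere bookkeeping in characteristic $p$.

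The crux is the remaining case $m\neq 0$ and $(q-1)\nmid m$, where the assertion is that the sum vanishes. Here I would argue by substitution. Set $S:=\sum_{a\in\Fq}a^m$. Because $\Fq^*$ is cyclic of order $q-1$ and $(q-1)\nmid m$, there exists $b\in\Fq^*$ with $b^m\neq 1$ (a primitive element works). Since the map $a\mapsto ba$ permutes $\Fq$, re-indexing yields
$$S=\sum_{a\in\Fq}(ba)^m=b^m\sum_{a\in\Fq}a^m=b^m S,$$
so that $(1-b^m)S=0$; as $b^m\neq 1$, we conclude $S=0$. Equivalently, one could write the sum over $\Fq^*$ as a geometric progression in a primitive element $\alpha$ and observe that $(\alpha^m)^{q-1}-1=(\alpha^{q-1})^m-1=0$ while $\alpha^m-1\neq 0$, forcing the sum to be zero. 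The only point requiring any thought is the existence of an element $b$ with $b^m\neq 1$, which is immediate from the cyclic structure of $\Fq^*$; everything else reduces to elementary manipulation, so I do not anticipate a serious obstacle.
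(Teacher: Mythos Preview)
Your argument is correct and is the standard one. Note, however, that the paper does not supply its own proof of this lemma: it is quoted verbatim from Lidl--Niederreiter \cite[Lemma~7.3]{LiNi} and used as a black box, so there is nothing to compare against beyond observing that your proof is exactly the classical substitution/geometric-series argument one finds in that reference.
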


	From now, $\chi$ denote the non-trivial quadratic character of $\Fq^*$. 
	
	\begin{remark}\label{obs1}
		It is known that $\chi(a)=(-1)^n\in\C$ if and only if $a^{\frac{q-1}{2}}=(-1)^n\in\Fq$.
	\end{remark}

	The following lemma characterizes module $p$, the number of rational points of elliptic curves over $\Fp$.

	\begin{lemma}\label{item74}
		Let $E: y^2=x^3+Ax+B$ be  an elliptic curve over $\Fp$, where $p$ is a odd prime. The number of rational points of $E$ over $\Fp$ satisfies
the relation
		$$|E(\Fp)|-p-1\equiv -\sum_{l=\lceil\frac{p-1}{6}\rceil}^{\lfloor \frac{p-1}{4}\rfloor} \binom{\frac{p-1}{2}}{2l}\binom{2l}{\frac{p-1-2l}{2}}B^{\frac{p-1}{2}-2l}A^{3l-\frac{p-1}{2}}\!\!\!\!\pmod{p}$$
		
	\end{lemma}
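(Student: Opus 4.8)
The plan is to count $|E(\Fp)|$ by the standard quadratic-character formula, reduce everything modulo $p$ via Euler's criterion, and then turn the problem into a purely combinatorial power-sum evaluation handled by Lemma \ref{item75}.

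First I would observe that, for $f(x)=x^3+Ax+B$, the number of $y\in\Fp$ with $y^2=f(x)$ equals $1+\chi(f(x))$, where $\chi$ is the quadratic character of $\Fp^*$ with the convention $\chi(0)=0$. Counting affine points this way and adding the point at infinity gives
$$|E(\Fp)|=p+1+\sum_{x\in\Fp}\chi(f(x)),$$
so that $|E(\Fp)|-p-1=\sum_{x\in\Fp}\chi(f(x))$. By Euler's criterion (Remark \ref{obs1}), for every $x$ we have $\chi(f(x))\equiv f(x)^{(p-1)/2}\pmod p$, a congruence that also holds when $f(x)=0$. Hence
$$|E(\Fp)|-p-1\equiv\sum_{x\in\Fp}\left(x^3+Ax+B\right)^{\frac{p-1}{2}}\pmod p.$$

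Next I would expand the inner power by the multinomial theorem, setting $N:=(p-1)/2$ and writing
$$\left(x^3+Ax+B\right)^{N}=\sum_{i+j+k=N}\frac{N!}{i!\,j!\,k!}\,A^{j}B^{k}\,x^{3i+j},$$
and then interchange the two sums so that the inner sum $\sum_{x\in\Fp}x^{3i+j}$ can be evaluated by Lemma \ref{item75}. That lemma annihilates every term except those for which $3i+j$ is a positive multiple of $p-1=2N$, and each surviving term contributes the factor $-1$. Since $0\le 3i+j\le 3N<2(p-1)$, the only admissible value is $3i+j=p-1$.

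The main bookkeeping step, and the part I expect to be most delicate, is to identify the surviving terms and collapse the multinomial coefficient. Solving $3i+j=p-1$ together with $i+j+k=N$ and introducing the index $l:=N-i$ yields $j=3l-N$ and $k=N-2l$; the constraints $j,k\ge 0$ translate precisely into $\lceil(p-1)/6\rceil\le l\le\lfloor(p-1)/4\rfloor$. A direct factorial identity then gives
$$\frac{N!}{i!\,j!\,k!}=\frac{N!}{(N-l)!\,(3l-N)!\,(N-2l)!}=\binom{N}{2l}\binom{2l}{N-l},$$
with $N-l=(p-1-2l)/2$. Substituting the surviving powers $A^{j}=A^{3l-N}$ and $B^{k}=B^{N-2l}$ and restoring the factor $-1$ produced by Lemma \ref{item75} then assembles the right-hand side into exactly the claimed congruence, completing the argument.
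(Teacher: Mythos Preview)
Your proof is correct and follows essentially the same route as the paper's: both express $|E(\Fp)|-p-1$ as the character sum $\sum_{x}\chi(x^3+Ax+B)$, replace $\chi$ by the $(p-1)/2$-th power via Remark~\ref{obs1}, expand, and invoke Lemma~\ref{item75} to isolate the terms with exponent $p-1$. The only cosmetic difference is that you expand $(x^3+Ax+B)^{(p-1)/2}$ directly by the multinomial theorem, whereas the paper uses a two-step binomial expansion (first in $B$, then in $A$); after reindexing these are the same computation, and your bookkeeping for the range of $l$ and the identification of the multinomial coefficient with $\binom{(p-1)/2}{2l}\binom{2l}{(p-1-2l)/2}$ is accurate.
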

	
	\begin{proof}
		We observe that
		
		$$1+\chi(x^3+Ax+B)=\begin{cases}
		0,\text{ if }x^3+Ax+B\text{ is not a square in }\Fp;\\
		2,\text{ if }x^3+Ax+B\text{ is a square in }\Fp.\\
		\end{cases}$$ 
		Therefore, considering $\infty$ as a solution, we have
		
		$$|E(\Fp)|=1+\sum_{x\in\Fp}[1+\chi(x^3+Ax+B)]=p+1+\sum_{x\in\Fp}\chi(x^3+Ax+B).$$
		By  Remark \ref{obs1},
		
		$$\begin{aligned}\sum_{x\in\Fp}\chi(x^3+Ax+B)
		&\equiv \sum_{x\in\Fp}(x^3+Ax+B)^{\frac{p-1}{2}}\\
		&\equiv \sum_{x\in\Fp}\sum_{i=0}^{\frac{p-1}{2}}\binom{\frac{p-1}{2}}{i}x^i(x^2+A)^iB^{\frac{p-1}{2}-i}\\
		&\equiv \sum_{x\in\Fp}\sum_{i=0}^{\frac{p-1}{2}}\binom{\frac{p-1}{2}}{i}B^{\frac{p-1}{2}-i}x^i\sum_{j=0}^{i}\binom{i}{j}x^{2j}A^{i-j}\\
		&\equiv \sum_{i=0}^{\frac{p-1}{2}}\sum_{j=0}^{i}\binom{\frac{p-1}{2}}{i}\binom{i}{j}A^{i-j}B^{\frac{p-1}{2}-i}\sum_{x\in\Fp}x^{2j+i}\pmod{p}.\\
		\end{aligned}$$
		
		By Lemma \ref{item75}, the  sum  $\sum_{x\in\Fp}x^{2j+i}$ is nonzero only if $2j+i\equiv 0\pmod{p-1}$ and $2j+i\neq 0$, and in these cases, the sum is $-1$. In addition, since  
		$$2j+i\le 2\cdot\frac{p-1}{2}+\frac{p-1}{2}=\frac{3(p-1)}{2}< 2(p-1),$$
it follows that 
		$$\begin{aligned}\sum_{i=0}^{\frac{p-1}{2}}\sum_{j=0}^{i}\binom{\frac{p-1}{2}}{i}\binom{i}{j}A^{i-j}B^{\frac{p-1}{2}-i}\sum_{x\in\Fp}x^{2j+i}
		&\equiv -\sum_{l=\lceil\frac{p-1}{6}\rceil}^{\lfloor \frac{p-1}{4}\rfloor} \binom{\frac{p-1}{2}}{2l}\binom{2l}{\frac{p-1-2l}{2}}B^{\frac{p-1}{2}-2l}A^{2i-\frac{p-1-2l}{2}}\\
		&\equiv -\sum_{l=\lceil\frac{p-1}{6}\rceil}^{\lfloor \frac{p-1}{4}\rfloor} \binom{\frac{p-1}{2}}{2l}\binom{2l}{\frac{p-1-2l}{2}}B^{\frac{p-1}{2}-2l}A^{3l-\frac{p-1}{2}}\pmod{p}.\\
		\end{aligned}$$ $\hfill\qed$
		
	\end{proof}

\begin{corollary}\label{item711}
	Let $E: y^2=x^3+B$ be  an elliptic curve over $\Fp$, where $p$ is a odd prime. The number of rational points of $E$ over $\Fp$ satisfies
the relation $|E(\Fp)|-p-1\equiv 0 \pmod{p}$  if  $p\equiv 2 \pmod 3$  and  
	$$|E(\Fp)|-p-1\equiv -\binom{\frac{p-1}{2}}{\frac{p-1}{3}} B^{\frac{p-1}{6}}\pmod{p}$$
if  $p\equiv 1 \pmod 3$.
\end{corollary}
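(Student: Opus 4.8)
The plan is to specialize Lemma~\ref{item74} to $A=0$ and track which summands survive. First I would substitute $A=0$ into the congruence of Lemma~\ref{item74}: each term then carries the factor $A^{3l-\frac{p-1}{2}}$. On the summation range $\lceil\frac{p-1}{6}\rceil\le l\le\lfloor\frac{p-1}{4}\rfloor$ the exponent is nonnegative, since $3l\ge 3\lceil\frac{p-1}{6}\rceil\ge\frac{p-1}{2}$. Using the convention $0^0=1$ already fixed in the statement of Lemma~\ref{item75}, this forces $A^{3l-\frac{p-1}{2}}=0$ whenever $3l>\frac{p-1}{2}$, so the \emph{only} index that can contribute is the one (if it exists as an integer) with $3l=\frac{p-1}{2}$.

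Next I would split according to the residue of $p$ modulo $3$. If $p\equiv 2\pmod 3$, then $3\nmid(p-1)$, hence $6\nmid(p-1)$ and the equation $3l=\frac{p-1}{2}$ has no integer solution; every remaining summand then carries a strictly positive power of $A=0$ and vanishes. This immediately gives $|E(\Fp)|-p-1\equiv 0\pmod p$, which is the first assertion.

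If $p\equiv 1\pmod 3$, then $3\mid(p-1)$ and, as $p$ is odd, $6\mid(p-1)$, so $l=\frac{p-1}{6}$ is an integer. It coincides with $\lceil\frac{p-1}{6}\rceil$ and satisfies $\frac{p-1}{6}\le\lfloor\frac{p-1}{4}\rfloor$, hence lies in the range and is the unique contributing index. For this value I would record $2l=\frac{p-1}{3}$ and $\frac{p-1-2l}{2}=\frac{p-1}{3}$, so that $\binom{2l}{\frac{p-1-2l}{2}}=\binom{\frac{p-1}{3}}{\frac{p-1}{3}}=1$, while $\binom{\frac{p-1}{2}}{2l}=\binom{\frac{p-1}{2}}{\frac{p-1}{3}}$, $B^{\frac{p-1}{2}-2l}=B^{\frac{p-1}{6}}$, and $A^{3l-\frac{p-1}{2}}=0^0=1$. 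Substituting the single surviving term back into Lemma~\ref{item74} yields
$$|E(\Fp)|-p-1\equiv -\binom{\frac{p-1}{2}}{\frac{p-1}{3}}B^{\frac{p-1}{6}}\pmod p.$$

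There is no substantial obstacle here: the whole argument reduces to the vanishing of positive powers of $A=0$ together with the elementary arithmetic of the two residue classes. The only points demanding care are checking nonnegativity of the exponent $3l-\frac{p-1}{2}$ on the summation range and the bookkeeping that isolates $l=\frac{p-1}{6}$ as the unique surviving index with the simplification $\binom{\frac{p-1}{3}}{\frac{p-1}{3}}=1$; everything else is routine.
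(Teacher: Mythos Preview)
Your argument is correct and is exactly the intended one: the paper states this as an immediate corollary of Lemma~\ref{item74} with no separate proof, and your specialization $A=0$ together with the bookkeeping that isolates $l=\frac{p-1}{6}$ (or shows no such integer exists when $p\equiv 2\pmod 3$) is precisely what that entails.
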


	\begin{theorem}\cite[Theorem 2.3.1, Chapter V]{Sil}\label{item66} Let $E:y^2+a_1xy+a_2y=x^3+a_3x^2+a_4x+a_5$ be an elliptic curve over $\Fq$. Then, there is a complex number $\omega$ such that $|\omega|=\sqrt{q}$ and
		$$|E(\Fqk)|=q^k+1-\omega^k-\overline{\omega}^{\,k},$$
		for all positive integer $k$.
	\end{theorem}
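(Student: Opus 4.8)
The plan is to follow Silverman's approach via the $q$-power Frobenius endomorphism together with the positivity of the degree map on the endomorphism ring, which simultaneously produces the complex number $\omega$ and the point-count formula. Let $\phi\colon E\to E$ denote the Frobenius endomorphism, given in affine coordinates by $(x,y)\mapsto (x^q,y^q)$, and let $1$ denote the identity endomorphism. The first observation is that a point $P\in E(\overline{\Fq})$ lies in $E(\Fqk)$ precisely when its coordinates are fixed by the $q^k$-power map, that is, when $\phi^k(P)=P$, or equivalently $P\in\ker(\phi^k-1)$ (difference taken in $\mathrm{End}(E)$). Since the point at infinity is always fixed, this gives $|E(\Fqk)|=|\ker(\phi^k-1)|$, so the task reduces to computing the size of this kernel.

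Next I would show that $\phi^k-1$ is a separable isogeny, so that $|\ker(\phi^k-1)|=\deg(\phi^k-1)$. Separability is detected by the action on an invariant differential $\eta$ of $E$: the assignment $\psi\mapsto\psi^*\eta$ is additive on $\mathrm{End}(E)$, the (inseparable) Frobenius satisfies $\phi^*\eta=0$, and $[-1]^*\eta=-\eta$, whence $(\phi^k-1)^*\eta=(\phi^*)^k\eta-\eta=-\eta\neq 0$. A map is separable exactly when it pulls back $\eta$ to something nonzero, so $\phi^k-1$ is separable and the point-count equals $\deg(\phi^k-1)$.

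The heart of the argument is that the degree map $\deg\colon\mathrm{End}(E)\to\Z$ extends to a positive-definite quadratic form on $\mathrm{End}(E)\otimes\R$; this follows from the parallelogram law for $\deg$ and the relation $\psi\hat\psi=[\deg\psi]$ coming from dual isogenies. Applied to Frobenius, this shows that $\phi$ satisfies its characteristic equation $\phi^2-a\phi+q=0$ in $\mathrm{End}(E)$, where $\deg\phi=q$ and $a:=q+1-|E(\Fq)|$. Because $\deg$ is positive definite, the discriminant obeys $a^2-4q\le 0$, so the two roots of $T^2-aT+q$ are complex conjugates $\omega,\overline{\omega}$ with $\omega\overline{\omega}=q$, and in particular $|\omega|=\sqrt{q}$. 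Embedding the commutative ring $\Z[\phi]$ into $\C$ by $\phi\mapsto\omega$ identifies $\deg$ with the complex norm, so that
$$
\deg(\phi^k-1)=(\omega^k-1)(\overline{\omega}^{\,k}-1)=q^k+1-\omega^k-\overline{\omega}^{\,k}.
$$
Chaining the three reductions then yields $|E(\Fqk)|=q^k+1-\omega^k-\overline{\omega}^{\,k}$ for every positive integer $k$, which is exactly the claim.

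The main obstacle is the positivity of the degree form: proving that $\deg$ is a positive-definite quadratic form (equivalently, the Hasse bound $|a|\le 2\sqrt{q}$) is the one genuinely nontrivial input, since it is what guarantees both that $\omega$ is complex of modulus $\sqrt{q}$ and that the kernel count collapses into a clean norm computation. The separability step and the comparison of coefficients are formal once the theory of isogenies and dual isogenies on elliptic curves is available, so the argument is essentially a bookkeeping of these two facts around the positivity statement.
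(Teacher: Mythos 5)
Your proposal is correct, but note that the paper offers no proof of this statement at all: it is quoted verbatim from Silverman (Theorem 2.3.1 of Chapter V of \cite{Sil}), and your argument is precisely the proof given in that cited source --- identifying $E(\Fqk)$ with $\ker(\phi^k-1)$, establishing separability of $\phi^k-1$ via the action on an invariant differential, and obtaining $|\omega|=\sqrt{q}$ from the positive definiteness of the degree form (i.e., Hasse's bound $a^2\le 4q$), after which the norm computation $\deg(\phi^k-1)=(\omega^k-1)(\overline{\omega}^{\,k}-1)$ closes the argument. The only point worth polishing is the embedding $\Z[\phi]\hookrightarrow\C$, which needs the degenerate case $a^2=4q$ handled separately (there $\phi=\pm\sqrt{q}\in\Z$, since $\mathrm{End}(E)$ has no nilpotents, and the formula still holds with $\omega=\overline{\omega}=\pm\sqrt{q}$); otherwise your sketch is a faithful reconstruction of the cited proof.
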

	
	The motivation for the next two definitions comes from Lemma \ref{item71}, where we find the number of rational points of a specific elliptic curve.
	
	\begin{definition}\label{item70}
		Let $p\neq 3$ be a prime number. Let define 
$\kappa_7=1$ and $\kappa_{13}=-5$ and for $p\equiv 1\pmod{3}$,  $p\ge 19$, $\kappa_p$ is defined as  the unique integer that satisfies  $|\kappa_p|\leq 2\sqrt{p}$ and  
		$$\kappa_p\equiv-\binom{\frac{p-1}{2}}{\frac{p-1}{3}}\cdot4^{-\frac{p-1}{6}}\pmod{p}.$$
 In addition, for $p\equiv 2\pmod{3}$, let define $\kappa_p=0$.
	\end{definition}

From this definition, it is not clear that $\kappa_p$ exists for all $p$. But, as we will see, Lemma \ref{item71}  guarantee  the existence of $\kappa_p$. 	
	\begin{definition}\label{item76}
	Let $\pi_p$ denotes the complex number 	
		$$\frac{-\kappa_p}{2}+i\, \sqrt{p-\dfrac{\kappa_p^2}{4}}.$$
	\end{definition}

	\begin{lemma}\label{item71} Let $E:y^2=x^3+4^{-1}$ be an elliptic curve over $\Fp$, with $p$ prime and $p\not\in \{2,3\}$. Then 
		
		$$|E(\Fpk)|=p^k+1-\pi_p^k-\overline{\pi_p}^{\,k},$$
		for all positive integer $k$.
	\end{lemma}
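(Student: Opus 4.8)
The plan is to compare the abstract shape of the zeta function of $E$ coming from Theorem \ref{item66} with the explicit congruence for the point count modulo $p$ given by Corollary \ref{item711}, and to show that together they pin down the Frobenius eigenvalue as $\pi_p$. First I would apply Theorem \ref{item66} to $E:y^2=x^3+4^{-1}$ over $\Fp$: there is a complex number $\omega$ with $|\omega|=\sqrt p$ such that $|E(\Fpk)|=p^k+1-\omega^k-\overline{\omega}^{\,k}$ for every $k\ge 1$. Taking $k=1$ and writing $s_p:=|E(\Fp)|-p-1$, this gives $\omega+\overline{\omega}=-s_p$; in particular $s_p$ is an integer, $\mathrm{Re}(\omega)=-s_p/2$, and since $|\omega|=\sqrt p$ we get the Hasse bound $|s_p|=|\omega+\overline{\omega}|\le 2\sqrt p$.

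Because a complex number is determined by its modulus and real part up to conjugation, the whole statement reduces to the single identity $s_p=\kappa_p$: once this is known, $\mathrm{Re}(\omega)=-\kappa_p/2=\mathrm{Re}(\pi_p)$ together with $|\omega|=\sqrt p=|\pi_p|$ (the latter being immediate from Definition \ref{item76}, since $|\pi_p|^2=\kappa_p^2/4+(p-\kappa_p^2/4)=p$) forces $\omega\in\{\pi_p,\overline{\pi_p}\}$, and either way $\omega^k+\overline{\omega}^{\,k}=\pi_p^k+\overline{\pi_p}^{\,k}$, which is exactly the claimed formula.

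To establish $s_p=\kappa_p$ I would combine the congruence with the Hasse bound. By Corollary \ref{item711} with $B=4^{-1}$, if $p\equiv 2\pmod 3$ then $s_p\equiv 0\pmod p$; since $|s_p|\le 2\sqrt p<p$ for $p\ge 5$, this forces $s_p=0=\kappa_p$. If $p\equiv 1\pmod 3$, the corollary gives $s_p\equiv-\binom{(p-1)/2}{(p-1)/3}4^{-(p-1)/6}\pmod p$, which is precisely the residue used to define $\kappa_p$ in Definition \ref{item70}, so $s_p\equiv\kappa_p\pmod p$. Both $s_p$ and $\kappa_p$ lie in $[-2\sqrt p,2\sqrt p]$, an interval of length $4\sqrt p$ that is shorter than $p$ exactly when $p>16$; hence for $p\ge 19$ two integers of this interval congruent modulo $p$ must coincide, giving $s_p=\kappa_p$. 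This argument also retroactively supplies the existence of $\kappa_p$ asserted after Definition \ref{item70}, since $s_p$ is an integer with $|s_p|\le 2\sqrt p$ and the required residue.

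The remaining cases are $p=7$ and $p=13$, where $4\sqrt p>p$ and the interval argument fails; this is exactly why $\kappa_7$ and $\kappa_{13}$ were fixed by hand, and it is the one step that genuinely requires separate care. For these two primes I would count points directly: over $\F_7$ one has $4^{-1}=2$, and tallying $x^3+2$ against the quadratic residues $\{1,2,4\}$ yields $|E(\F_7)|=9$, so $s_7=1=\kappa_7$; over $\F_{13}$ one has $4^{-1}=10$, and the analogous tally (using that $x\mapsto x^3$ is $3$-to-$1$ on $\F_{13}^*$) yields $|E(\F_{13})|=9$, so $s_{13}=-5=\kappa_{13}$. With $s_p=\kappa_p$ in every case, the identification $\omega\in\{\pi_p,\overline{\pi_p}\}$ from the second paragraph completes the proof.
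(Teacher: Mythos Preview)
Your proof is correct and follows essentially the same approach as the paper: apply Theorem~\ref{item66} to obtain $\omega$, then use Corollary~\ref{item711} together with the definition of $\kappa_p$ to identify $\omega$ with $\pi_p$ (up to conjugation). Your version is in fact more complete than the paper's terse argument, since you spell out why the Hasse bound plus the mod-$p$ congruence pins down $s_p=\kappa_p$ for $p\ge 19$ and you verify the two exceptional primes $p=7,13$ by direct count, whereas the paper simply asserts that Corollary~\ref{item711} and Definition~\ref{item76} yield $\omega=\pi_p$.
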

	
	\begin{proof}
		By Theorem \ref{item66}, there exists  a complex number $\omega$ such that $|\omega|=\sqrt{q}$ for which
		
		$$|E(\Fp)|=p+1-\omega-\overline{\omega}.$$	
		By  Corollary  \ref{item711} and  Definition \ref{item76}, we conclude that $\omega=\pi_p$. Now, by Theorem \ref{item66}, we have	
		$$|E(\Fpk)|=p^k+1-\pi_p^k-\overline{\pi_p}^{\,k},$$
		for all positive integer $k$.$\hfill\qed$
	\end{proof}
	
	\section{Binomials of the form $x^n(x^{\frac{q-1}{2}}+a)$}
	
	In this section,  $\Fq$ denote a finite field of odd characteristic $p$.  Masuda-Zieve Theorem (Theorem \ref{teoMasuda}), in the case $r=2$, implies that the number $N$ of elements $a\in\Fq$ for which the binomial $x^n(x^{\frac{q-1}{2}}+a)$ permutes $\Fq$ satisfies 
	$$\frac{q-5}{2}\leq N\leq \frac{q+1}{2}.$$
One goal of this article is to determine the exact value of $N$ in this case  (Theorem \ref{item5}). In order to proof that theorem, we shall now show the necessary and sufficient conditions for that  binomial  of the form $x^n(x^{\frac{q-1}{2}}+a)$ to be a permutation polynomial. 

	\begin{lemma}\cite[Theorem 7.11]{LiNi}\label{item57}
		Let $n$ be a positive integer and $a$ an element in $\Fq^*$. Then, $f(x):=x^n(x^{\frac{q-1}{2}}+a)$ is a permutation polynomial if and only if
		
		\begin{enumerate}[(a)]
			\item $\gcd\big(n,\tfrac{q-1}{2}\big)=1$;\vskip0.1cm
			\item $\chi(a^2-1)=(-1)^{n+1}$.
		\end{enumerate}
	\end{lemma}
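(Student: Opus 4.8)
The plan is to apply the Wan--Lidl criterion (Theorem \ref{teoPerm}) directly. First I would put $f$ into the canonical form $f(x)=x^r h(x^{(q-1)/m})+b$: taking $b=0$, $r=n$, $m=2$ and $h(y)=y+a$, one has $f(x)=x^n h(x^{(q-1)/2})$ with $h(0)=a\neq 0$ since $a\in\Fq^*$. I would then argue that the index of $f$ is exactly $2$: the two monomials of $f$ have exponents $n$ and $n+\frac{q-1}{2}$, whose difference $\frac{q-1}{2}$ is not divisible by $q-1$, so the index cannot be $1$, while the displayed representation shows it divides $2$. Hence $m=2$ and Theorem \ref{teoPerm} applies with $\alpha$ a primitive element of $\Fq$. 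Throughout I would exploit the single key identity $\alpha^{(q-1)/2}=-1$, valid because $\alpha^{(q-1)/2}$ is the unique element of order $2$ in $\Fq^*$.

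Next I would read off the three conditions of Theorem \ref{teoPerm}. Condition (i), $\gcd(n,\tfrac{q-1}{2})=1$, is literally condition (a), so nothing is to be done there. For condition (ii), evaluating $h$ at $\alpha^{0}=1$ and at $\alpha^{(q-1)/2}=-1$ gives $h(1)=a+1$ and $h(-1)=a-1$, so (ii) is equivalent to $a\neq\pm 1$, that is $a^2-1\neq 0$; this is precisely the requirement that the symbol $\chi(a^2-1)$ in (b) be nonzero, so (ii) is subsumed by (b).

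For condition (iii), with $m=2$ the only comparison is $f(1)^{(q-1)/2}\neq f(\alpha)^{(q-1)/2}$. I would compute $f(1)=a+1$ and $f(\alpha)=\alpha^n(a-1)$, and then use Remark \ref{obs1} to replace each $(q-1)/2$-power by the corresponding quadratic character value in $\{+1,-1\}$, obtaining $f(1)^{(q-1)/2}=\chi(a+1)$ and $f(\alpha)^{(q-1)/2}=(-1)^n\chi(a-1)$. Since these lie in $\{+1,-1\}$, the inequality $\chi(a+1)\neq(-1)^n\chi(a-1)$ is equivalent to $\chi(a+1)=(-1)^{n+1}\chi(a-1)$; multiplying by $\chi(a-1)$ and using multiplicativity of $\chi$ gives $\chi(a^2-1)=\chi(a+1)\chi(a-1)=(-1)^{n+1}$, which is condition (b).

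Combining, (i)$\Leftrightarrow$(a), while (ii) together with (iii) is equivalent to (b), so Theorem \ref{teoPerm} yields the claimed equivalence. I expect the only genuinely delicate point to be the bookkeeping in the reduction of condition (iii): one must use that $\chi$ takes only the values $\pm 1$ on $\Fq^*$ to turn the inequality of character values into the multiplicative identity $\chi(a^2-1)=(-1)^{n+1}$, and one must check that the non-vanishing hypothesis $a^2-1\neq 0$ coming from (ii) is exactly what makes $\chi(a^2-1)$ meaningful in (b).
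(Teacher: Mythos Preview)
Your proof is correct and follows essentially the same route as the paper's: both apply the Wan--Lidl criterion (Theorem~\ref{teoPerm}) with $m=2$, use $\alpha^{(q-1)/2}=-1$ to compute the two values in condition~(iii), and invoke Remark~\ref{obs1} to convert the resulting inequality of $(q-1)/2$-th powers into the identity $\chi(a^2-1)=(-1)^{n+1}$. You are simply more explicit than the paper about the index being~$2$ and about condition~(ii) being absorbed into~(b) via the requirement $a^2-1\neq 0$.
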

	
	\begin{proof}
		It is enough to show that the condition $(b)$ is equivalent to the condition $(iii)$ in Theorem \ref{teoPerm}. Let $\alpha$ be an primitive element in $\Fq$ and observe that 
		$$\alpha^{\frac{n(q-1)}{2}}(\alpha^{\frac{q-1}{2}}+a)^{\frac{q-1}{2}}\neq \alpha^{n\frac{2(q-1)}{2}}(\alpha^{\frac{2(q-1)}{2}}+a)^{\frac{q-1}{2}}$$
		is equivalent to $(-1)^n(a-1)^{\frac{q-1}{2}}\neq 1\cdot(a+1)^{\frac{q-1}{2}},$ which is the same as $(a^2-1)^{\frac{q-1}{1}}\neq(-1)^n$. So, the result follows from Remark \ref{obs1}.$\hfill\qed$
	\end{proof}
	
	\begin{theorem}\label{item5}
		Let $n$ be an integer such that $\gcd(n,\frac{q-1}{2})=1$. The number of elements $a\in\Fq$ for which the binomial $x^n(x^{\frac{q-1}{2}}+a)$ permutes $\Fq$ is given by the formula
		$$\frac{q-2+(-1)^n}{2}.$$
	\end{theorem}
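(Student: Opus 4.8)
The plan is to combine Lemma \ref{item57}, which settles the case $a\in\Fq^*$, with a separate analysis of the excluded value $a=0$, and then to reduce the whole count to a single quadratic character sum evaluated via Lemma \ref{item14}.

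First, since the hypothesis $\gcd(n,\frac{q-1}{2})=1$ is exactly condition $(a)$ of Lemma \ref{item57}, that lemma tells us that for $a\in\Fq^*$ the binomial permutes $\Fq$ precisely when $\chi(a^2-1)=(-1)^{n+1}$. Thus the number of admissible nonzero $a$ is the number of $a\in\Fq^*$ satisfying this character condition. It then remains to decide the single value $a=0$, where the binomial degenerates to the monomial $x^{n+\frac{q-1}{2}}$, which permutes $\Fq$ if and only if $\gcd\!\big(n+\frac{q-1}{2},\,q-1\big)=1$. Using $\gcd(n,\frac{q-1}{2})=1$ I would simplify this gcd to $\gcd\!\big(n+\frac{q-1}{2},2\big)$, so that $a=0$ works exactly when $n+\frac{q-1}{2}$ is odd.

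The observation that makes the formula clean is that this parity condition coincides with $\chi(-1)=(-1)^{n+1}$. Indeed, by Remark \ref{obs1} we have $\chi(-1)=(-1)^{(q-1)/2}$, so $n+\frac{q-1}{2}$ odd is the same as $(-1)^{(q-1)/2}=(-1)^{n+1}$. Since $\chi(-1)=\chi(0^2-1)$, the value $a=0$ is a permutation-producing parameter precisely when it would satisfy the character condition $\chi(a^2-1)=(-1)^{n+1}$. Consequently the total count, including $a=0$, equals the number of all $a\in\Fq$ with $\chi(a^2-1)=(-1)^{n+1}$, with the boundary value $a=0$ absorbed seamlessly. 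This bookkeeping is the only genuinely delicate point of the argument.

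Finally I would evaluate this count. Writing $N_{+}$ and $N_{-}$ for the number of $a\in\Fq$ with $\chi(a^2-1)$ equal to $+1$ and $-1$ respectively, and noting that $a^2-1=0$ only for the two values $a=\pm1$, I get $N_{+}+N_{-}=q-2$ and $N_{+}-N_{-}=\sum_{a\in\Fq}\chi(a^2-1)$. To compute this sum $S$, I would factor $\chi(a^2-1)=\chi(a-1)\chi(a+1)$, substitute $b=a-1$, and for $b\neq0$ use $\chi(b)^2=1$ to rewrite the summand as $\chi(1+2b^{-1})$; as $b$ runs over $\Fq^*$ the argument $1+2b^{-1}$ runs over $\Fq\setminus\{1\}$, so Lemma \ref{item14} gives $S=-\chi(1)=-1$. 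Solving the two linear relations yields $N_{+}=\frac{q-3}{2}$ and $N_{-}=\frac{q-1}{2}$. Selecting the sign $(-1)^{n+1}$ according to the parity of $n$ then gives $\frac{q-3}{2}$ when $n$ is odd and $\frac{q-1}{2}$ when $n$ is even, both of which equal $\frac{q-2+(-1)^n}{2}$, as claimed.
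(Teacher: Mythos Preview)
Your proof is correct and follows the same strategy as the paper: reduce via Lemma~\ref{item57} to counting $a$ with $\chi(a^2-1)=(-1)^{n+1}$, then evaluate the associated quadratic character sum (the paper substitutes $b=a^2$ and expands $(1+\chi(b))(1+\chi(b-1))$, while you factor $a^2-1=(a-1)(a+1)$ and make a linear change of variable, but both collapse to $\sum_{c\neq1}\chi(c)=-1$). Your explicit verification that the boundary value $a=0$ satisfies the same character condition---via the parity of $n+\tfrac{q-1}{2}$ and $\chi(-1)=(-1)^{(q-1)/2}$---is a detail the paper passes over without comment, so your argument is in fact a bit more careful on that point.
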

	\begin{proof}  
	Let $M$ denote the number of elements $a\in\Fq$ for which  $\chi(a^2-1)=1$, and in this case,  if $b:=a^2\ne 1$, then $b$ and $b-1$ are  simultaneously squares in $\Fq$.	In other hand,	
		$$\big(1+\chi(b)\big)\big(1+\chi(b-1)\big)=\begin{cases}
		0,\text{ if }\text{either } b\text{ or }b-1\text{ is not a square residue;}\\
		4,\text{ if }b\text{ and }b-1\text{ are square residues and } b(b-1)\ne 0\\
		2,\text{ if }b\text{ and }b-1\text{ are square residues and } b(b-1)= 0\\
		\end{cases}$$\
Using these equalities we have
		$$2M=\sum_{\substack{b\in\Fq\\ b\neq1}}\big(1+\chi(b)\big)\big(1+\chi(b-1)\big)=\sum_{\substack{b\in\Fq\\ b\neq1}}1+\sum_{\substack{b\in\Fq\\ b\neq1}}\chi(b)+\sum_{\substack{b\in\Fq\\ b\neq1}}\chi(b-1)+\sum_{\substack{b\in\Fq\\ b\neq1}}\chi(b(b-1)).$$
The first summation is equal to $q-1$ and, by Lemma \ref{item14}, we conclude that the second summation is equal to $-1$ and the third summation is equal to $0$. In order to calculate the last summation, by Remark \ref{obs1}, we have	
		$$\sum_{\substack{b\in\Fq\\ b\neq1}}\chi(b(b-1))=\sum_{\substack{b\in\Fq\\ b\neq1}}\chi(b^2)\chi(1-b^{-1})=\sum_{\substack{x\in\Fq\\ x\neq1}}\chi(x)=-\chi(1)=-1.$$
		Therefore, we conclude that $M=\frac{q-3}{2}$. 
		
		Now, by Theorem \ref{item57} the number of $a\in\Fq$ for which the binomial $x^n(x^{\frac{q-1}{2}}+a)$ permutes $\Fq$ is given by $M=\frac{q-3}{2}$ if $n$ is odd and $|\Fq\backslash\{\pm 1\}|-M=\frac{q-1}{2}$ if $n$ is even.
%
%
%
%
$\hfill\qed$
		
	\end{proof}

	\section{Binomials of the form $x^n(x^{\frac{q-1}{3}}+a)$}
	
	In this section, we assume that $q\equiv 1\pmod{3}$, $\xi\ne1$ be a cubic root of unit in $\Fq$ and $\delta\ne 1$ be a cubic root of unit in $\C$. Let  $\eta$ be the cubic multiplicative character that satisfies
	$$\eta(a)=\delta^n\text{ if and only if }a^{\frac{q-1}{3}}=\xi^n,\text{ for all }a\in\Fq^*$$
and extend $\eta$ to $\Fq$  defining $\eta(0)=0$. By Masuda-Zieve Theorem (Theorem \ref{teoMasuda}), the number $N$ of elements $a\in\Fq$ for which the binomial $x^n(x^{\frac{q-1}{3}}+a)$ permutes $\Fq$ satisfies 
	$$\frac{2}{9}\big(q-35-20\sqrt{q}\big)\leq N\leq \frac{2}{9}\big(q+1+20\sqrt{q}\big).$$\
	In Theorem \ref{item772}, we determine the exact value of $N$. In order to proof the principal result of this section, we need the following technical lemmas. 
	
	\begin{lemma}\label{item59}
		Let $n$ be a positive integer. The polynomial $f(x):=x^n(x^{\frac{q-1}{3}}+a)\in\Fx$ is a permutation polynomial over $\Fq$ if and only if the following conditions are satisfied
		\begin{enumerate}[(a)]
			\item $\gcd\big(n,\tfrac{q-1}{3}\big)=1$;\vskip0.15cm
			\item $a \not\in\{-1,-\xi,-\xi^2\}$;\vskip0.15cm
			\item $\eta\big(\frac{\xi+a}{1+a}\big)\neq\delta^{2n}$;\vskip0.15cm
			\item $\eta\big(\frac{1+a}{\xi^2+a}\big)\neq\delta^{2n}$;\vskip0.15cm
			\item $\eta\big(\frac{\xi^2+a}{\xi+a}\big)\neq\delta^{2n}$.
		\end{enumerate}
	\end{lemma}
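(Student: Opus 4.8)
The plan is to obtain all five conditions as a direct translation of the Wan--Lidl criterion (Theorem \ref{teoPerm}) applied to $f$. First I would put $f$ into the required shape: setting $h(y):=y+a$, we have $f(x)=x^n h\!\left(x^{(q-1)/3}\right)$, which is of the form $x^r h(x^{(q-1)/m})+b$ with $r=n$, $m=3$ and $b=0$. Throughout I take $a\in\Fq^*$, since the index is $3$ precisely when $a\neq0$: as $a\neq 0$ the lowest-degree term is $ax^n$, so $r=n$ is forced and $b=0$; factoring it out gives $f(x)=x^n\!\left(a+x^{(q-1)/3}\right)$, and the only nonzero exponent appearing in $h$ is $(q-1)/3$, whose largest divisor is $(q-1)/3$ itself, forcing $m=3$. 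Hence Theorem \ref{teoPerm} is legitimately applicable with these parameters.

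Next I would read off conditions (i) and (ii). Condition (i) is $\gcd(n,(q-1)/3)=1$, which is exactly (a). For (ii), fix a primitive element $\alpha$; then $\alpha^{(q-1)/3}$ is a primitive cube root of unity, so as $i$ runs over $\{0,1,2\}$ the values $\alpha^{i(q-1)/3}$ run over $\{1,\xi,\xi^2\}$. Thus (ii) reads $h(1)=1+a\ne 0$, $h(\xi)=\xi+a\ne 0$, $h(\xi^2)=\xi^2+a\ne 0$, i.e. $a\notin\{-1,-\xi,-\xi^2\}$, which is (b).

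The substance is condition (iii): the three elements $f(\alpha^i)^{(q-1)/3}$, $i=0,1,2$, must be pairwise distinct. Since $c^{(q-1)/3}=\xi^k \iff \eta(c)=\delta^k$ and the map $\delta^k\mapsto\xi^k$ is a bijection of $\{1,\delta,\delta^2\}$ onto $\{1,\xi,\xi^2\}$, this is equivalent to pairwise distinctness of $\eta\!\left(f(\alpha^i)\right)$. Writing $\zeta_i:=\alpha^{i(q-1)/3}$ and using multiplicativity of $\eta$, we get $\eta\!\left(f(\alpha^i)\right)=\eta(\alpha)^{ni}\,\eta(\zeta_i+a)$. Choosing $\alpha$ with $\alpha^{(q-1)/3}=\xi$ (the other admissible choice $\xi^2$ only relabels the three resulting conditions, as a quick check confirms), the three values become $\eta(1+a)$, $\delta^{n}\eta(\xi+a)$ and $\delta^{2n}\eta(\xi^2+a)$.

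Finally I would write out the three pairwise inequalities and normalise them. The pair $(i,j)=(0,2)$ gives $\eta\!\left(\tfrac{1+a}{\xi^2+a}\right)\ne\delta^{2n}$, which is (d) verbatim. The pairs $(0,1)$ and $(1,2)$ give $\eta\!\left(\tfrac{1+a}{\xi+a}\right)\ne\delta^{n}$ and $\eta\!\left(\tfrac{\xi+a}{\xi^2+a}\right)\ne\delta^{n}$; inverting both sides of each and using $\delta^{-n}=\delta^{2n}$ turns these into (c) and (e). Conversely, each step is reversible, so the conjunction of (a)--(e) is equivalent to (i)--(iii), proving the biconditional. I expect the only delicate points to be the verification that the index is exactly $3$ (so the theorem applies) and the bookkeeping that converts the exponents $\xi^k\in\Fq$ into the complex values $\delta^k$ and matches the reciprocals to the precise form of (c)--(e); these are routine but error-prone.
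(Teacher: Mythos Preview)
Your proof is correct and follows essentially the same route as the paper: both apply Theorem~\ref{teoPerm} with $m=3$, identify (a)--(b) with conditions (i)--(ii), and obtain (c)--(e) by rewriting the three pairwise inequalities in (iii) as ratios and translating $\xi^k\leftrightarrow\delta^k$ via the character $\eta$. If anything, you are slightly more careful than the paper in checking that the index is exactly $3$ (for $a\ne 0$) and in fixing the primitive element so that $\alpha^{(q-1)/3}=\xi$.
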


	\begin{proof}
		Observe that the condition $(b)$ is equivalent to condition $(ii)$ in Theorem \ref{teoPerm}. Thus, it is sufficient to show that the conditions $(c)$, $(d)$ and $(e)$ are equivalent to the condition $(iii)$ in Theorem \ref{teoPerm}.  Since $\xi=\alpha^{(q-1)/3}$ for some  $\alpha$  primitive element in $\Fq$  that  condition is the same as  
		$$\xi^{ni}(\xi^i+a)^{\frac{q-1}{3}}\neq \xi^{nj}(\xi^j+a)^{\frac{q-1}{3}},\text{ with }0\leq i<j<3,$$
	and we can rewrite  these inequalities as  
		$$\dfrac{\xi^{n}(\xi+a)^{\frac{q-1}{3}}}{\xi^{0\cdot n}(1+a)^{\frac{q-1}{3}}}\neq 1,\  \dfrac{\xi^{n}(1+a)^{\frac{q-1}{3}}}{\xi^{2n}(\xi^2+a)^{\frac{q-1}{3}}}\neq1\ \text{ and }\ \dfrac{\xi^{2n}(\xi^2+a)^{\frac{q-1}{3}}}{\xi^{n}(\xi+a)^{\frac{q-1}{3}}}\neq1.$$
	By the choice of the character $\eta$, these  are equivalent to
		$$\eta\bigg(\frac{\xi+a}{1+a}\bigg)\neq\delta^{2n},\ \eta\bigg(\frac{1+a}{\xi^2+a}\bigg)\neq\delta^{2n}\ \text{ and }\ \eta\bigg(\frac{\xi^2+a}{\xi+a}\bigg)\neq\delta^{2n}.$$
			Hence, the result follows. $\hfill\qed$
	\end{proof}


	\begin{lemma}\label{item65}
		Let $\Lambda:=\Fq\backslash\{-1,-\xi,-\xi^2\}$. Then
		
		$$\delta^n\sum\limits_{a\in\Lambda}\eta\bigg(\frac{\xi+a}{1+a}\bigg)+\delta^{2n}\sum\limits_{a\in\Lambda}\eta^2\bigg(\frac{\xi+a}{1+a}\bigg)=\epsilon_1+\epsilon_2,$$
		where
		
		$$\epsilon_1=\begin{cases}
		-2,\text{ if }q-3n\equiv 1\pmod{9};\\
		1,\text{ if }q-3n\not\equiv 1\pmod{9}.\\
		\end{cases}\text{ and }\ \ \epsilon_2=\begin{cases}
		-2,\text{ if }n\equiv0\pmod{3};\\
		1,\text{ if }n\not\equiv0\pmod{3}.\\
		\end{cases}$$
	\end{lemma}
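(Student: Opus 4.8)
The plan is to evaluate the combined character sum on the left-hand side by the substitution $t = \frac{\xi+a}{1+a}$, which is a M\"obius transformation in $a$, and then apply the standard orthogonality and Jacobi-sum machinery. First I would note that as $a$ ranges over $\Lambda = \Fq\setminus\{-1,-\xi,-\xi^2\}$, the map $a \mapsto t := \frac{\xi+a}{1+a}$ is a bijection from $\Lambda$ onto $\Fq \setminus \{1, \xi^2\}$ (the value $a=-1$ is excluded from the domain because it is the pole, while $a=-\xi$ gives $t=0$ and $a=-\xi^2$ gives the value we must track). Under this substitution the two sums become $\sum_t \eta(t)$ and $\sum_t \eta^2(t)$ over the relevant restricted ranges, so the whole expression reduces to incomplete character sums whose missing terms are exactly the arguments forced out by the three excluded points.

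Next I would invert the M\"obius map to express the excluded and special arguments explicitly. Solving $t=\frac{\xi+a}{1+a}$ gives $a = \frac{\xi - t}{t-1}$, so the forbidden values of $a$ correspond to the forbidden $t$-values $t=1$ (from $a=\infty$, i.e.\ the pole $a=-1$), $t=0$ (from $a=-\xi$), and a third distinguished value $t=\xi$ arising from $a=-\xi^2$; these are precisely the points where $\eta$ or $\eta^2$ must be handled by hand rather than absorbed into a complete sum. By Lemma~\ref{item14}, the complete sums $\sum_{t\in\Fq^*}\eta(t)$ and $\sum_{t\in\Fq^*}\eta^2(t)$ both vanish, so each incomplete sum equals the negative of the sum of $\eta$ (respectively $\eta^2$) over the handful of omitted arguments. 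This collapses the left-hand side into a finite combination of values $\eta(\xi)=\delta$, $\eta^2(\xi)=\delta^2$, together with the factors $\delta^n$ and $\delta^{2n}$ already present.

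I would then separate the resulting expression into the two pieces $\epsilon_1$ and $\epsilon_2$ according to how the prefactors $\delta^n$ and $\delta^{2n}$ interact with the evaluated character values. The key identities are $\eta(\xi)=\delta$ and the defining relation $\eta(a)=\delta^m \iff a^{(q-1)/3}=\xi^m$; combining these with $\delta^n \cdot \delta^{k} = \delta^{n+k}$ lets me read off when each contribution equals $-2$ versus $1$. Concretely, the term controlled by $n \bmod 3$ yields $\epsilon_2$ (it collapses to $-2$ exactly when $\delta^{3\cdot(\text{something})}=1$, i.e.\ $n\equiv 0 \pmod 3$, and the geometric-type sum $1+\delta^{n}+\delta^{2n}$ evaluates to $3$ or $0$ accordingly), while the term whose exponent mixes $q$ and $3n$ produces $\epsilon_1$, with the congruence $q-3n\equiv 1\pmod 9$ being precisely the condition under which the relevant power of $\delta$ becomes trivial and the contribution jumps to $-2$.

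The main obstacle I anticipate is the careful bookkeeping of the exponents modulo $9$: the condition $q - 3n \equiv 1 \pmod 9$ (as opposed to a simpler condition modulo $3$) signals that one must track not just $\eta(\xi)=\delta$ but a more delicate interaction coming from the specific value $t=\xi$ surviving the substitution, where $\eta^2(\xi)=\delta^2$ combines with $\delta^{2n}$ to give an exponent that is sensitive to $n \bmod 3$ and to the residue of $q \bmod 9$ simultaneously. I expect the clean split into $\epsilon_1 + \epsilon_2$ to emerge only after matching each omitted $t$-value to its correct character power and correctly pairing it with the $\delta^n$ versus $\delta^{2n}$ prefactor; getting these pairings right — and verifying the two-case dichotomy in each of $\epsilon_1$ and $\epsilon_2$ — is where the real work lies, whereas the orthogonality step itself is routine.
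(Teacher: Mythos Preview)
Your approach is exactly the paper's: substitute $t=\frac{\xi+a}{1+a}$, use orthogonality (Lemma~\ref{item14}) to reduce each sum to minus the contribution of the omitted $t$-values, and then evaluate the resulting finite expression in powers of $\delta$. Two small computational corrections will make the sketch go through. First, the image bookkeeping: the map $a\mapsto t$ sends $\Fq\setminus\{-1\}$ bijectively onto $\Fq\setminus\{1\}$, and the two further excluded points give $t(-\xi)=0$ and $t(-\xi^2)=\dfrac{\xi-\xi^2}{1-\xi^2}=\dfrac{\xi}{1+\xi}=-\xi^2$ (not $\xi$), so the omitted set is $\{1,0,-\xi^2\}$. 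Second, $\eta(\xi)=\delta$ is not true in general; what you actually need is $\eta(-\xi^2)$, and since $(-\xi^2)^{(q-1)/3}=\xi^{(q-1)/3}\cdot(-1)^{(q-1)/3}\cdot\xi^{(q-1)/3}$ depends on $(q-1)/3\bmod 3$, the combination $\delta^n\eta(-\xi^2)+\delta^{2n}\eta^2(-\xi^2)$ collapses via $1+\delta^l+\delta^{2l}\in\{0,3\}$ to the $q-3n\equiv 1\pmod 9$ dichotomy, while the terms from $t=1$ give $-\delta^n-\delta^{2n}$ and hence the $n\bmod 3$ dichotomy. With these two fixes your outline is the paper's proof.
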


	\begin{proof}
		We observe that the function $f:\Fq\backslash\{-1\}\rightarrow\Fq\backslash\{1\}$ defined by $x\mapsto\frac{\xi+x}{1+x}$ is a bijective function. Since $f(x)$ is injective, 
		
		$$\delta^n\sum\limits_{a\in\Lambda}\eta\bigg(\frac{\xi+a}{1+a}\bigg)+\delta^{2n}\sum\limits_{a\in\Lambda}\eta^2\bigg(\frac{\xi+a}{1+a}\bigg)=\delta^n\sum\limits_{b\in \Im(f|_\Lambda)}\eta(b)+\delta^{2n}\sum\limits_{b\in \Im(f|_\Lambda)}\eta^2(b),$$
where $\Im(f|_\Lambda)$ denotes the image of the function $f$ considering the domain $\Lambda$.
		
It follows from Lemma \ref{item14} that
		$$\delta^n\sum\limits_{b\in \Im(\Lambda)}\eta(b)+\delta^{2n}\sum\limits_{b\in \Im(\Lambda)}\eta^2(b)=-\delta^n\sum\limits_{b\in \Fq\backslash\Im(f|_\Lambda)}\eta(b)-\delta^{2n}\sum\limits_{b\in \Fq\backslash\Im(f|_\Lambda)}\eta^2(b).$$\\
		
Since $f(x)$ is a bijective function, $\Fq\backslash\Im(f|_\Lambda)=\{1,f(-\xi),f(-\xi^2)\}=\{1,0,-\xi^2\}$. Then,
		
		$$\begin{aligned}
		-\delta^n\sum\limits_{b\in \Fq\backslash\Im(f|_\Lambda)}\eta(b)-\delta^{2n}\sum\limits_{b\in \Fq\backslash\Im(f|_\Lambda)}\eta^2(b)
		&=\delta^n[-\eta(1)-\eta(-\xi^2)]+\delta^{2n}[-\eta^2(1)-\eta^2(-\xi^2)]\\
		&=-\delta^n-\delta^{2n}-\big(\delta^{-n}\delta^{\frac{q-1}{3}}\big)^2-\big(\delta^{-n}\delta^{\frac{q-1}{3}}\big).\\
		\end{aligned}$$
		Finally,  defining $\epsilon_1=-\delta^n-\delta^{2n}$ and $\epsilon_2=-\big(\delta^{-n}\delta^{\frac{q-1}{3}}\big)^2-\big(\delta^{-n}\delta^{\frac{q-1}{3}}\big)$ and using the fact that 
		$$1+\delta^l+\delta^{2l}=\begin{cases} 0 &\text{if } l\not\equiv 0\pmod 3\\
		 3&\text{if } l\equiv 0\pmod 3,
		 \end{cases}$$ we obtain the result. $\hfill\qed$
	\end{proof}

	\begin{lemma}\label{item67}
		Let $\Fq$ be a finite field with $q=p^k$ elements, where $p$ is an odd prime. Let denote $\Lambda=\Fq\backslash\{-1,-\xi,-\xi^2\}$. Then
		$$\sum\limits_{a\in\Lambda}\eta\bigg(\frac{a^2-a+1}{a^2+2a+1}\bigg)+\eta^2\bigg(\frac{a^2-a+1}{a^2+2a+1}\bigg)=-2-\pi_p^{\,k}-\overline{\pi_p}^{\,k},$$
		where $\pi_p$ is as in Definition \ref{item76}.
	\end{lemma}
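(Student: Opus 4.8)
The plan is to recognize the character sum over $\Lambda$ as essentially counting rational points on the elliptic curve $E: y^2 = x^3 + 4^{-1}$ appearing in Lemma \ref{item71}, and then invoke that lemma. First I would observe that for a cubic character $\eta$, the quantity $1 + \eta(t) + \eta^2(t)$ equals $3$ when $t$ is a nonzero cube, $0$ when $t$ is a nonzero noncube, and $1$ when $t = 0$; equivalently $\eta(t) + \eta^2(t)$ counts (with a shift) the number of solutions to $z^3 = t$. Writing $t(a) = \frac{a^2 - a + 1}{a^2 + 2a + 1} = \frac{a^2 - a + 1}{(a+1)^2}$, the sum $\sum_{a \in \Lambda}\bigl(\eta(t(a)) + \eta^2(t(a))\bigr)$ therefore measures, up to explicit correction terms, the number of pairs $(a, z)$ with $z^3 = t(a)$, i.e. the number of affine points on the curve $C: z^3 (a+1)^2 = a^2 - a + 1$ with $a \in \Lambda$.

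The key step is to transform this cubic-in-$z$ curve into the Weierstrass form $y^2 = x^3 + 4^{-1}$. Since $a^2 - a + 1$ and $(a+1)^2$ are both quadratics in $a$, I would complete an appropriate substitution: noting that $a^2 - a + 1 = (a+1)^2 - 3(a+1) + 3$ suggests setting $u = \frac{1}{a+1}$ (valid since $a \ne -1$ on $\Lambda$), which turns $t(a)$ into $1 - 3u + 3u^2$. Then $z^3 = 1 - 3u + 3u^2 = 3\bigl(u - \tfrac12\bigr)^2 + \tfrac14$, and a linear rescaling of $z$ and $u$ brings this to the standard shape $y^2 = x^3 + 4^{-1}$ after clearing the cubic/quadratic roles by the usual birational map (multiplying through and renaming variables so the squared term becomes $y^2$ and the cubed term becomes $x^3$). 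I expect this algebraic normalization — tracking exactly how each point of $C$ corresponds to a point of $E$, and which finitely many points (the excluded $a \in \{-\xi, -\xi^2\}$, the branch where $a + 1 = 0$, and the point at infinity) are added or removed — to be the main obstacle, since it governs the precise constant $-2$ in the final formula.

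Once the birational correspondence is pinned down, I would count: the number of affine solutions to $z^3 = t(a)$ over all relevant $a$ equals $|E(\Fq)|$ minus the contribution of the points at infinity and the handful of exceptional fibers, and by Lemma \ref{item71} we have $|E(\Fq)| = |E(\Fpk)| = p^k + 1 - \pi_p^k - \overline{\pi_p}^{\,k}$. Assembling the pieces, the main character sum equals this point count reduced by the trivial $q$-term (which cancels against the number of $a$'s counted with the base contribution) and adjusted by the correction for $t(a) = 0$ and for the excluded values in $\Lambda$. The leftover principal part $q$ and the off-by-constant terms should collapse exactly to $-2 - \pi_p^{\,k} - \overline{\pi_p}^{\,k}$, matching the claim. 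The delicate bookkeeping is entirely in the constant $-2$, which records the precise number of exceptional points lost in passing between $C$ and $E$, so I would verify that count carefully, likely by checking the fibers over $a = -\xi$, $a = -\xi^2$, the value $t(a) = 0$, and the behavior at infinity, and confirming that the $\eta + \eta^2$ weighting assigns the correct $-1$ or $+1$ to each.
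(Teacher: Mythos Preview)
Your plan is correct and essentially matches the paper's argument: reduce the character sum to a count of cubes of $t(a)$, pass to the elliptic curve $y^2=x^3+4^{-1}$ by an explicit change of variables, apply Lemma~\ref{item71}, and recover the constant $-2$ from the finite exceptional locus. The only cosmetic difference is that the paper solves $t(a)=b^3$ as a quadratic in $a$ and reads off the discriminant curve $x^2=-3+12b^3$ (then transforms to $y^2=x^3+4^{-1}$ via $(x,y)\mapsto((2\xi^2-2\xi)x,-y)$), whereas you first substitute $u=1/(a+1)$; both routes land on the same curve and the same bookkeeping over the nine exceptional points.
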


	\begin{proof}
		Observe that
		$$\eta(b)+\eta^2(b)=\begin{cases}
		2,\text { if there is }c\in\Fq^*\text{ such that }b=c^3;\\
		-1,\text {in otherwise}.\\
		\end{cases}$$
		Let 
		$S:= \sum\limits_{a\in\Lambda}\eta\big(\frac{a^2-a+1}{a^2+2a+1}\big)+\eta^2\big(\frac{a^2-a+1}{a^2+2a+1}\big)$ and $f(x):=\tfrac{x^2-x+1}{x^2+2x+1}$. Thus,
		\begin{equation}\label{equa52}
		S=2\cdot\big|\big\{a\in\Lambda:f(a)\text{ is a cube in }\Fq\big\}\big|-1\cdot\big|\big\{a\in\Lambda:f(a)\text{ is not a cube in }\Fq\big\}\big|.
		\end{equation}
		
		Therefore, it is enough to calculate the cardinality of $\mathcal{A}:=\{a\in\Lambda:f(a)\text{ is a cube in }\Fq\}$. Given $b$ an element in $\Fq^*$, we want to determine  the elements $a\in\Lambda$ for which $f(a)=b^3$ for some $b\in \Fq$. However, assuming that $a$ is an element in $\Lambda$ such that $f(a)=b^3$, we get
		
		\begin{equation}\label{equa50}
		a^2(1-b^3)-a(1+2b^3)+1-b^3=0.
		\end{equation}
		
		If $b^3=1$, then $a=0$. If $b^3\neq 1$, this equation has two solutions in $\mathbb{F}_{q^2}$, given by
		\begin{equation}\label{equa51}
		a=\frac{1+2b^3\pm\sqrt{\Delta}}{2(1-b^3)},
		\end{equation}\\
		where $\Delta:=(1+2b^3)^2-4(1-b^3)^2=(1+2b^3+2-2b^3)(1+2b^3-2+2b^3)=-3+12b^3$. Note that the values for $a$ given by \eqref{equa51} are in $\Fq$ if and only if $\Delta=-3+12b^3$ is a square in $\Fq$. Hence, fixed $b\in\Fq$, there exists $a\in\Lambda$ such that $f(a)=b^3$ if and only if there exists $x_a\in\Fq$ such that $x_a^2=-3+12b^3$. Thus, this last problem is equivalent to finding the number of rational points of the curve 
		
		$$E:x^2=-3+12y^3$$
		over $\Fq$. It should be noted that some rational points in $E(\Fq)$ are related to elements that are not in $\Lambda$, which are $(\xi-\xi^2,0)$ and $(\xi^2-\xi,0)$. Besides these, the six rational points $(\pm3,1)$, $(\pm3,\xi)$, $(\pm3,\xi^2)\in E(\Fq)$ should also not be considered, because are related to $b=1$. Now, let $G=\{\infty, (\xi-\xi^2,0), (\xi^2-\xi,0), (\pm3,1), (\pm3,\xi), (\pm3,\xi^2)\}\subset E(\Fq)$ and let
		
		$$\varphi:E(\Fq)\backslash G\longrightarrow \mathcal{A}\backslash\{0\}$$\\
		be a function defined by 
		
		$$(x,y)\mapsto \frac{1+2y^3+x}{2(1-y^3)}.$$
		
		Clearly, by definition of $\mathcal{A}$, $\varphi$ is a surjective function. In addition, if $(x_0,y_0)$ is a rational point in $E(\Fq)\backslash G$, then $(x_0,\xi y_0),(x_0,\xi^2y_0)\in E(\Fq)\backslash G$ are such that $\varphi((x_0,y_0))=\varphi((x_0,\xi y_0))=\varphi((x_0,\xi^2 y_0))$. We observe also that if $(x_1,y_1)$ is rational point in $E(\Fq)\backslash G$ such that $(x_1,y_1)$ does not belongs to $\{(x_0,y_0),(x_0,\xi y_0),(x_0,\xi^2y_0)\}$, then $\varphi((x_1,y_1))\neq \varphi((x_0,y_0))$. Consequently, 
		
		\begin{equation}\label{equa10}
		|E(\Fq)\backslash G|=3\cdot|\mathcal{A}\backslash\{0\}|.
		\end{equation}
		
		Thus, in order to calculate $|\mathcal{A}|$, we only need to find the number of elements in $E(\Fq)$. Now, if we make the change of variables $(x,y)\mapsto((2\xi^2-2\xi)x,-y)$ in the curve $E$, then we get $E':x^2=4^{-1}+y^3$. Then $|E(\Fq)|=|E'(\Fq)|$. By Lemma \ref{item71} and by Equation \eqref{equa10}, we have
		
		$$|\mathcal{A}|=|\mathcal{A}\backslash\{0\}|+1=\frac{|E(\Fq)|-|G|}{3}+1=\frac{q-5-\pi_p^k-\overline{\pi_p}^{\,k}}{3}.$$
		
		Hence, by Equation \eqref{equa52}, the result is proved.$\hfill\qed$
	\end{proof}

	 \vskip 0.4cm 
	The following lemma shows an equivalent result of Lemma \ref{item67} but in fields with $4^k$ elements.
	
	\begin{lemma}\label{item78}
		Let $\Lambda:=\mathbb{F}_{4^k}\backslash\{-1,-\xi,-\xi^2\}$. Then
		$$S:=\sum\limits_{a\in\Lambda}\eta\bigg(\frac{a^2+a+1}{a^2+1}\bigg)+\eta^2\bigg(\frac{a^2+a+1}{a^2+1}\bigg)=-2+(-2)^{k+1}.$$
	\end{lemma}

	\begin{proof}
		Let $f(x):=\tfrac{x^2+x+1}{x^2+1}$. By the same argument used to  prove  the previous lemma, we have
		
		\begin{equation}\label{equa55}
		S=2\cdot\big|\big\{a\in\Lambda:f(a)\text{ is a cube in }\mathbb{F}_{4^k}\big\}\big|-1\cdot\big|\big\{a\in\Lambda:f(a)\text{ is not a cube in }\mathbb{F}_{4^k}\big\}\big|.
		\end{equation}
		
		Therefore, it is enough to calculate the cardinality of $\mathcal{A}:=\{a\in\Lambda:f(a)\text{ is a cube in }\mathbb{F}_{4^k}\}$. Given $b$ an element in $\mathbb{F}_{4^k}^*$, we want to find what are the elements $a\in\Lambda$ for which $f(a)=b^3$. However, assuming that $a$ is an element in $\Lambda$ such that $f(a)=b^3$, we get
		
		\begin{equation}\label{equa56}
		a^2(1-b^3)-a+1-b^3=a^2(1+b^3)+a+1+b^3=0.
		\end{equation}
		
		If $b^3=1$, then $a=0$. Now, we consider $b^3\neq 1$. Making the change of variables $x=a(1+b^3)$ and $y=b^{2^{2k-1}}$, we get the equation 
		
		\begin{equation}\label{equa57}
		x^2+x+1+y^3=0.
		\end{equation}
		
		Since $(a,b)\mapsto(a(1+b^3),b^{2^{2k-1}})$ is a bijective function in $\Fq\times\Fq\backslash\{1,\xi,\xi^2\}$, the number of rational points on \eqref{equa56} and \eqref{equa57} is the same. Now, let $E:x^2+x=y^3+1$ be a elliptic curve over $\mathbb{F}_{4^k}$. We let $G=\{\infty, (\xi,0), (\xi^2,0), (0,1), (0,\xi), (0,\xi^2), (1,1), (1,\xi), (1,\xi^2)\}\subset E(\mathbb{F}_{4^k})$ and let
		
		$$\varphi:E(\mathbb{F}_{4^k})\backslash G\longrightarrow \mathcal{A}\backslash\{0\}$$\\
		defined by 
		
		$$(x,y)\mapsto x(1+y^6)^{-1}.$$
		As in the previous lemma, it follows that 
		
		\begin{equation}\label{equa14}
		|E(\mathbb{F}_{4^k})\backslash G|=3\cdot|\mathcal{A}\backslash\{0\}|,
		\end{equation}
		and by Theorem \ref{item66}, there exists a complex number $\omega\in\C$, with $|\omega|=\sqrt{2}$, such that
		
		\begin{equation}\label{equa16}
		|E(\mathbb{F}_{2^m})|=2^m+1-\omega^m-\overline{\omega}^{\,m},\text { for all  }m\in\N.
		\end{equation}
		The rational points of $E:x^2+x=y^3+1$ over $\F_2$ are $\infty, (0,1)$ and $(1,1)$, then 
		
		$$2^n+1-\omega-\overline{\omega}=|E(\mathbb{F}_{2})|=3,$$
		thus $\omega=\sqrt{2}i$. Finally, by Equation \eqref{equa14} and Equation \eqref{equa16}, we get
		
		$$|\mathcal{A}|=|\mathcal{A}\backslash\{0\}|+1=\frac{|E(\mathbb{F}_{4^k})|-|G|}{3}+1=\frac{4^k-5+(-2)^{k+1}}{3}.$$
		Hence, by Equation \eqref{equa55}, the result is proved.$\hfill\qed$\\
	\end{proof}
	
	Our main result is the following.
	
	\begin{theorem}\label{item772}
		Let $\Fq$ be a finite field with characteristic $p$ and $q=p^k$. Assume $q\equiv 1\pmod{3}$. Let $n$ be a positive integer such that $\gcd(n,\frac{q-1}{3})=1$. The number of elements $a\in\Fq$ for which the binomial $f(x)=x^n(x^{\frac{q-1}{3}}+a)$ permutes $\Fq$ is given by 
		
		$$\frac{2q-3(\epsilon_1+\epsilon_2)-10-2(\pi_p^{\,k}+\overline{\pi_p}^{\,k})}{9},$$
		where $\pi_p$ is as in Definition \ref{item76} and
		$$\epsilon_1=\begin{cases}
		-2,\text{ if }q-3n\equiv 1\pmod{9};\\
		1,\text{ if }q-3n\not\equiv 1\pmod{9}.\\
		\end{cases}\text{ and }\ \ \epsilon_2=\begin{cases}
		-2,\text{ if }n\equiv0\pmod{3};\\
		1,\text{ if }n\not\equiv0\pmod{3}.\\
		\end{cases}$$
	\end{theorem}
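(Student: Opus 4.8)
The plan is to invoke Lemma \ref{item59}, which reduces the problem to counting the elements $a\in\Lambda=\Fq\setminus\{-1,-\xi,-\xi^2\}$ that simultaneously satisfy conditions (c), (d) and (e). Regarding $u_1=\frac{\xi+a}{1+a}$, $u_2=\frac{1+a}{\xi^2+a}$ and $u_3=\frac{\xi^2+a}{\xi+a}$ as functions of $a$, the key structural observation is that $u_1u_2u_3=1$ and that each $u_i$ is a nonzero element of $\Fq$ for every $a\in\Lambda$; in particular $\eta(u_1)\eta(u_2)\eta(u_3)=1$. The three conditions are $\eta(u_i)\neq\delta^{2n}$ for $i=1,2,3$. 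Using orthogonality of the cubic characters together with $\delta^{-2n}=\delta^n$ and $\delta^{-4n}=\delta^{2n}$, I would write each indicator as
$$\mathbf 1[\eta(u_i)\neq\delta^{2n}]=\tfrac13\big(2-\delta^n\eta(u_i)-\delta^{2n}\eta^2(u_i)\big),$$
so that the quantity to be computed is
$$N=\frac{1}{27}\sum_{a\in\Lambda}\prod_{i=1}^3\big(2-\delta^n\eta(u_i)-\delta^{2n}\eta^2(u_i)\big).$$

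Next I would expand the product into its $27$ monomials. A typical monomial, after summing over $a$, is a sign times a power of $2$ times $\sum_{a\in\Lambda}\eta^{s_1}(u_1)\eta^{s_2}(u_2)\eta^{s_3}(u_3)$ with $s_i\in\{0,1,2\}$; using $u_3=(u_1u_2)^{-1}$ every such sum collapses to one of the nine sums $J(\alpha,\beta):=\sum_{a\in\Lambda}\eta^\alpha(u_1)\eta^\beta(u_2)$, $\alpha,\beta\in\{0,1,2\}$. Evaluating these nine sums is the heart of the matter, and all of them reduce to quantities already computed: $J(0,0)=q-3$; the six sums $J(1,0),J(0,1),J(2,2)$ and $J(2,0),J(0,2),J(1,1)$ reduce, via the same identity $u_3=(u_1u_2)^{-1}$, to $\sum_{a\in\Lambda}\eta(u_1)$ (the first three) and to $\sum_{a\in\Lambda}\eta^2(u_1)$ (the last three); and the two remaining sums are $J(1,2)=\sum_{a\in\Lambda}\eta(u_1/u_2)$ and $J(2,1)=\sum_{a\in\Lambda}\eta^2(u_1/u_2)$.

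To identify the first six sums I would use the cyclic symmetry $a\mapsto\xi a$, which maps $\Lambda$ bijectively onto itself and satisfies $u_1(\xi a)=u_2(a)$, $u_2(\xi a)=u_3(a)$, $u_3(\xi a)=u_1(a)$; consequently $\sum_{a\in\Lambda}\eta(u_1)=\sum_{a\in\Lambda}\eta(u_2)=\sum_{a\in\Lambda}\eta(u_3)$, and similarly for $\eta^2$. The decisive point is that the individual sum $\sum_{a\in\Lambda}\eta(u_1)$, which I cannot evaluate in closed form, enters only through the combination $\delta^n\sum_{a\in\Lambda}\eta(u_1)+\delta^{2n}\sum_{a\in\Lambda}\eta^2(u_1)$, which Lemma \ref{item65} equates to $\epsilon_1+\epsilon_2$. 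Likewise, since $u_1/u_2=\frac{a^2-a+1}{a^2+2a+1}$, the pair $J(1,2),J(2,1)$ enters only through $J(1,2)+J(2,1)=\sum_{a\in\Lambda}\big[\eta(\frac{a^2-a+1}{a^2+2a+1})+\eta^2(\frac{a^2-a+1}{a^2+2a+1})\big]$, which equals $S:=-2-\pi_p^{\,k}-\overline{\pi_p}^{\,k}$ by Lemma \ref{item67} when $p$ is odd and by Lemma \ref{item78} when $p=2$. Collecting the $27$ monomials, the contributions combine to
$$\sum_{a\in\Lambda}\prod_{i=1}^3\big(2-\delta^n\eta(u_i)-\delta^{2n}\eta^2(u_i)\big)=6(q-3)-9(\epsilon_1+\epsilon_2)+6S,$$
and dividing by $27$ and substituting $S$ yields exactly $\frac{2q-3(\epsilon_1+\epsilon_2)-10-2(\pi_p^{\,k}+\overline{\pi_p}^{\,k})}{9}$.

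The main obstacle is not a single deep step but the organized bookkeeping of the expansion, together with the structural fact that makes it close: because $u_1u_2u_3=1$, every monomial reduces to one of only nine character sums, and the two genuinely unknown sums, $\sum_{a\in\Lambda}\eta(u_1)$ and $\sum_{a\in\Lambda}\eta(u_1/u_2)$, appear solely through the two combinations resolved by Lemmas \ref{item65} and \ref{item67} (or \ref{item78}). Verifying that the coefficients of these unknown sums assemble into precisely those combinations, and that the leftover constant term is exactly $6(q-3)$, is the careful part; once this is checked, the final arithmetic simplification is routine.
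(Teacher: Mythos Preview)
Your proposal is correct and follows essentially the same approach as the paper: reduce via Lemma \ref{item59}, write the count as $\tfrac{1}{27}\sum_{a\in\Lambda}\prod_i\big(2-\delta^n\eta(u_i)-\delta^{2n}\eta^2(u_i)\big)$, expand, exploit $u_1u_2u_3=1$ and the cyclic symmetry to reduce all terms to $\sum_{a}\eta(u_1)$, $\sum_{a}\eta^2(u_1)$, and $\sum_{a}\eta^{\pm 1}(u_1/u_2)$, and then invoke Lemmas \ref{item65}, \ref{item67}, \ref{item78}. The only cosmetic differences are that the paper groups the expansion into seven blocks $N_1,\dots,N_7$ rather than your nine sums $J(\alpha,\beta)$, and that the paper asserts the equality of the multisets $\{\eta(\lambda_i)\}_{a\in\Lambda}$ without naming the bijection, whereas you make the substitution $a\mapsto\xi a$ explicit; your justification is in fact cleaner on this point.
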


	\begin{proof}Let $\Lambda:=\Fq\backslash\{-1,-\xi,-\xi^2\}$. By Lemma \ref{item59}, it is enough to calculate the number of elements $a\in\Lambda$ for which 
		$$\eta\left(\frac{\xi+a}{1+a}\right)\neq\delta^{2n}, \eta\left(\frac{1+a}{\xi^2+a}\right)\neq\delta^{2n}\text{ and }\eta\left(\frac{\xi^2+a}{\xi+a}\right)\neq\delta^{2n},$$
		Let denote by $N$ the number of such elements. To simplify the notation, let 
		
		\begin{equation}\label{equa48}
		\lambda_0:=\frac{\xi+a}{1+a}, \lambda_1:=\frac{1+a}{\xi^2+a}\text{ and }\lambda_2:=\frac{\xi^2+a}{\xi+a},
		\end{equation}
		leaving implicit the dependence on $a$. Note that
		
		\begin{equation}\label{equa60}
		2-\frac{\eta(x)}{\delta^{2n}}-\frac{\eta^2(x)}{\delta^{4n}}=\begin{cases}
		0,\text{ if }\eta(x)=\delta^{2n};\\
		3,\text{ if }\eta(x)\neq\delta^{2n},\\
		\end{cases}
		\end{equation}
Thus we know that
\begin{equation}\label{equa61}
N=\frac{1}{27}\sum\limits_{a\in\Lambda}\bigg[2-\frac{\eta(\lambda_0)}{\delta^{2n}}-\frac{\eta^2(\lambda_0)}{\delta^{n}}\bigg]\bigg[2-\frac{\eta(\lambda_1)}{\delta^{2n}}-\frac{\eta^2(\lambda_1)}{\delta^{n}}\bigg]\bigg[2-\frac{\eta(\lambda_2)}{\delta^{2n}}-\frac{\eta^2(\lambda_2)}{\delta^{n}}\bigg]
\end{equation}
In order to calculate $N$, we  
		 use the fact that 
		$$\{\eta(\lambda_0)\}_{a\in\Lambda}=\{\eta(\lambda_1)\}_{a\in\Lambda}=\{\eta(\lambda_2)\}_{a\in\Lambda},$$
so, rewriting Equation \eqref{equa61} as $N=\frac 1{27} \sum_{j=1}^7 N_j$, where\\
	
	
		$\begin{aligned}		
		N_1&=\sum\limits_{a\in\Lambda}\bigl( 8-\eta(\lambda_0)\eta(\lambda_1)\eta(\lambda_2)-
		\eta^2(\lambda_0)\eta^2(\lambda_1)\eta^2(\lambda_2)\bigr)\\
		&=\sum\limits_{a\in\Lambda}\bigl(8-\eta(1)-\eta^2(1)\bigr)\\
		&=6|\Lambda|=6(q-3).\\
		\end{aligned}$\\[0.8em]
		
		$\begin{aligned}
		N_2&=-4\delta^{n}\sum\limits_{a\in\Lambda}\bigl(\eta(\lambda_0)+\eta(\lambda_1)+\eta(\lambda_2)\bigr)
		-4\delta^{2n}\sum\limits_{a\in\Lambda}\bigl(\eta^2(\lambda_0)+\eta^2(\lambda_1)+\eta^2(\lambda_2)\bigr)\\
	&=-12\delta^{n}\sum\limits_{a\in\Lambda}\eta(\lambda_0)-12\delta^{2n}\sum\limits_{a\in\Lambda}\eta^2(\lambda_0).
		\end{aligned}$\\[0.8em]
		
		$\begin{aligned}
N_3&=-\delta^{n}\sum\limits_{a\in\Lambda}
\bigl(\eta^2(\lambda_0)\eta(\lambda_1)\eta(\lambda_2)+
\eta^2(\lambda_1)\eta(\lambda_0)\eta(\lambda_2)+\eta^2(\lambda_2)\eta(\lambda_0)\eta(\lambda_1)\bigr)\\
&=-\delta^{n}\sum\limits_{a\in\Lambda}
\bigl(\eta(\lambda_0)\eta(1)+\eta(\lambda_1)\eta(1)+\eta(\lambda_2)\eta(1)\bigr)\\
		&=-3\delta^{n}\sum\limits_{a\in\Lambda}\eta(\lambda_0).\\
		\end{aligned}$\\[0.8em]
		
		$\begin{aligned}	N_4&=-\delta^{2n}\sum\limits_{a\in\Lambda}
\bigl(\eta(\lambda_0)\eta^2(\lambda_1)\eta^2(\lambda_2)
+\eta(\lambda_1)\eta^2(\lambda_0)\eta^2(\lambda_2)+\eta(\lambda_2)\eta^2(\lambda_0)\eta^2(\lambda_1)\bigr)\\	&=-\delta^{2n}\sum\limits_{a\in\Lambda}
\bigl(\eta^2(\lambda_0)\eta^2(1)+\eta^2(\lambda_1)\eta^2(1)+\eta^2(\lambda_2)\eta^2(1)\bigr)\\
		&=-3\delta^{2n}\sum\limits_{a\in\Lambda}\eta^2(\lambda_0).\\
		\end{aligned}$\\[0.8em]
		
		$\begin{aligned}
		N_5&=2\delta^{2n}\sum\limits_{a\in\Lambda}
		\bigl(\eta(\lambda_1)\eta(\lambda_2)+\eta(\lambda_0)\eta(\lambda_3)+\eta(\lambda_1)\eta(\lambda_3)\bigr)\\
		&=2\delta^{2n}\sum\limits_{a\in\Lambda}\bigl(\eta(\lambda_0^{-1})
		+\eta(\lambda_1^{-1})+\eta(\lambda_2^{-1})\bigr)\\
		&=6\delta^{2n}\sum\limits_{a\in\Lambda}\eta^2(\lambda_0).\\
		\end{aligned}$\\[0.8em]
		
		$\begin{aligned}
		N_6&=2\delta^{n}\sum\limits_{a\in\Lambda}\bigl(\eta^2(\lambda_1)\eta^2(\lambda_2)
		+\eta^2(\lambda_0)\eta^2(\lambda_3)+\eta^2(\lambda_1)\eta^2(\lambda_3)\bigr)\\
		&=2\delta^{n}\sum\limits_{a\in\Lambda}
		\bigl(\eta^2(\lambda_0^{-1})+\eta^2(\lambda_1^{-1})+\eta^2(\lambda_2^{-1})\bigr)\\
		&=6\delta^{n}\sum\limits_{a\in\Lambda}\eta(\lambda_0).\\
		\end{aligned}$\\[0.8em]
		
		$\begin{aligned}
		N_7&=2\sum\limits_{a\in\Lambda}
		\bigl(\eta(\lambda_0\lambda_1^{-1})+\eta(\lambda_0\lambda_2^{-1})+\eta(\lambda_1\lambda_0^{-1})
		+\eta(\lambda_1\lambda_2^{-1})+\eta(\lambda_2\lambda_0^{-1})+\eta(\lambda_2\lambda_1^{-1})\bigr)\\
		&=6\sum\limits_{a\in\Lambda}\bigl(\eta(\lambda_0\lambda_1^{-1})+\eta(\lambda_1\lambda_0^{-1})\bigr)\\
		&=6\sum\limits_{a\in\Lambda}\bigl(\eta(\lambda_0\lambda_1^{-1})+\eta^2(\lambda_0\lambda_1^{-1})\bigr).\\
		\end{aligned}$
		
		Hence, 
		$$N\!=\frac{1}{27}\sum\limits_{j=1}^{7}N_j\!=\frac{6(q-3)}{27}-\frac{9}{27}\Bigg[\delta^n\sum\limits_{a\in\Lambda}\eta(\lambda_0)+\delta^{2n}\sum\limits_{a\in\Lambda}\eta^2(\lambda_0)\Bigg]\!+\frac{6}{27}\sum\limits_{a\in\Lambda}
		\bigl(\eta(\lambda_0\lambda_1^{-1})+\eta^2(\lambda_0\lambda_1^{-1})\bigr).$$
		By Lemma \ref{item65}, we have 
		$$\delta^n\sum\limits_{a\in\Lambda}\eta(\lambda_0)+\delta^{2n}\sum\limits_{a\in\Lambda}\eta^2(\lambda_0)=\epsilon_1+\epsilon_2.$$	
		and by Lemma \ref{item67} and Lemma \ref{item78}, we obtain	
		$$\sum\limits_{a\in\Lambda}\bigl(\eta(\lambda_0\lambda_1^{-1})+\eta^2(\lambda_0\lambda_1^{-1})\bigr)
		=-2-\pi_p^{\,k}-\overline{\pi_p}^{\,k}.$$
		Using the last three equations we obtain the result.$\hfill\qed$
	\end{proof}

\begin{corollary}\label{asinto3}
Let $p$ be a prime number, $k$ be a positive integer and $\Fq$ be a finite field with $q=p^k$ elements. Assume $q\equiv 1\pmod{3}$. Let $n$ be a positive integer such that $\gcd(n,\frac{q-1}{3})=1$ and $T_k$ be the number of elements $a\in\Fq$ for which the binomial $f(x)=x^n(x^{\frac{q-1}{3}}+a)$ permutes $\Fq$. Then
\begin{equation}\label{caso3}
\left\lceil\frac{2q-4\sqrt q-16}9\right\rceil \le T_k\le \left\lfloor\frac{2q+4\sqrt q-7} 9\right\rfloor.
\end{equation}
In addition, these inequalities are asymptotically sharp, i.e.,  for every $\epsilon>0$ there exist infinite values of $k$ and $k'$ such that 
\begin{equation}\label{caso3_1}
T_k<\frac 29q- \left(\frac 49-\epsilon\right)\sqrt q\ \text{ and }\ T_{k'}>\frac 29q +\left(\frac 49-\epsilon\right)\sqrt q.
\end{equation}
\end{corollary}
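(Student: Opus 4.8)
The plan is to convert the exact count of Theorem \ref{item772} into the two numerical estimates in \eqref{caso3} by controlling the two ``error terms'' $\epsilon_1+\epsilon_2$ and $\pi_p^{\,k}+\overline{\pi_p}^{\,k}$ separately, and then to produce explicit families meeting each side for the sharpness claim \eqref{caso3_1}. Write $t:=\pi_p^{\,k}+\overline{\pi_p}^{\,k}$ and $s:=\epsilon_1+\epsilon_2$, so that Theorem \ref{item772} reads $9T_k=2q-3s-10-2t$. First I would record the Hasse-type estimate $|t|\le 2\sqrt q$: by Definition \ref{item76} one has $|\pi_p|^2=\tfrac{\kappa_p^2}{4}+\bigl(p-\tfrac{\kappa_p^2}{4}\bigr)=p$, hence $|t|=|2\,\mathrm{Re}(\pi_p^{\,k})|\le 2|\pi_p|^k=2p^{k/2}=2\sqrt q$.

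The crux is to pin down the possible values of $s$. Since each of $\epsilon_1,\epsilon_2$ lies in $\{-2,1\}$, a priori $s\in\{-4,-1,2\}$; the decisive point is that $s=-4$ cannot occur. Indeed, $\epsilon_2=-2$ forces $3\mid n$, whence $3n\equiv 0\pmod 9$ and $q-3n\equiv q\pmod 9$; then $\epsilon_1=-2$ would force $q\equiv 1\pmod 9$, i.e. $3\mid\frac{q-1}{3}$, so that $3\mid\gcd\bigl(n,\tfrac{q-1}{3}\bigr)$, contradicting the hypothesis $\gcd\bigl(n,\tfrac{q-1}{3}\bigr)=1$. Hence $s\in\{-1,2\}$ and $-3s\in\{-6,3\}$. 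Substituting into $9T_k=2q-3s-10-2t$ together with $|t|\le 2\sqrt q$ yields $2q-4\sqrt q-16\le 9T_k\le 2q+4\sqrt q-7$; since $T_k$ is an integer, dividing by $9$ and applying the ceiling on the left and the floor on the right gives exactly \eqref{caso3}.

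For the sharpness I would exploit the supersingular case, which realises the extremal values of $t$ exactly. Choosing a prime $p\equiv 2\pmod 3$ gives $\kappa_p=0$ and $\pi_p=i\sqrt p$ by Definitions \ref{item70} and \ref{item76}; moreover $q=p^k\equiv 1\pmod 3$ forces $k$ even, and then $t=\pi_p^{\,k}+\overline{\pi_p}^{\,k}=p^{k/2}\bigl(i^{k}+(-i)^{k}\bigr)$ equals $+2\sqrt q$ when $k\equiv 0\pmod 4$ and $-2\sqrt q$ when $k\equiv 2\pmod 4$. Taking $n=1$ (so that $s\in\{-1,2\}$ stays bounded) and inserting these exact values into Theorem \ref{item772} gives $T_k=\tfrac 29 q-\tfrac 49\sqrt q-\tfrac{3s+10}{9}$ for $k\equiv 0\pmod 4$ and $T_k=\tfrac 29 q+\tfrac 49\sqrt q-\tfrac{3s+10}{9}$ for $k\equiv 2\pmod 4$. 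Since $\sqrt q=p^{k/2}\to\infty$ along each of these two infinite arithmetic progressions, the bounded term $\tfrac{3s+10}{9}$ is eventually absorbed, so the first family yields values below $\tfrac 29 q-(\tfrac 49-\epsilon)\sqrt q$ and the second values above $\tfrac 29 q+(\tfrac 49-\epsilon)\sqrt q$, establishing \eqref{caso3_1}.

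The main obstacle is the exclusion of $s=-4$: without it the naive estimate only gives $9T_k\le 2q+4\sqrt q+2$, an upper bound weaker by one unit, so the asymmetry between the $-16$ on the left of \eqref{caso3} and the $-7$ on the right is precisely what the $\gcd$ argument buys. A secondary technical point is that the clean extremal values $\pm 2\sqrt q$ are available exactly only for supersingular $p$; for $p\equiv 1\pmod 3$ the curve $E:y^2=x^3+4^{-1}$ is ordinary, so $\pi_p/\sqrt p$ is not a root of unity and one would instead invoke Weyl equidistribution of $\{k\,\mathrm{arg}\,\pi_p\}$ to make $t$ approach $\pm 2\sqrt q$ infinitely often. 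However, the supersingular family already suffices to exhibit the asymptotic sharpness of both bounds.
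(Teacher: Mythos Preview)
Your derivation of \eqref{caso3} is correct and coincides with the paper's: both hinge on ruling out $\epsilon_1=\epsilon_2=-2$ via the observation that $3\mid n$ together with $q\equiv 1\pmod 9$ would force $3\mid\gcd(n,\tfrac{q-1}{3})$, leaving $s\in\{-1,2\}$, and then combining with $|\pi_p^{\,k}+\overline{\pi_p}^{\,k}|\le 2\sqrt q$.

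For the sharpness claim \eqref{caso3_1} there is a gap of interpretation that matters. The corollary is stated for a \emph{given} prime $p$, and the paper proves that for \emph{each} such $p$ there are infinitely many $k$ realising both inequalities; this is confirmed by the structure of the paper's proof (``it remains to consider the case when the characteristic is congruent to $1\pmod 3$'') and by the worked example with $p=73$. Your argument only treats $p\equiv 2\pmod 3$, where $\pi_p=i\sqrt p$ and the extremes $\pm 2\sqrt q$ are hit exactly. Your final sentence, that ``the supersingular family already suffices,'' would establish only the weaker statement that \emph{some} prime exhibits sharpness, not that every prime does.

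For $p\equiv 1\pmod 3$ you gesture at Weyl equidistribution, which is indeed the right mechanism, but the substantive step you skip is showing that $\theta_p/\pi$ is irrational (equivalently, that $\pi_p/\sqrt p$ is not a root of unity). This is not automatic from ordinariness alone without some input. The paper supplies this input via Niven's theorem on the algebraic degree of $\tan(r\pi/s)$: since $\tan\theta_p=\sqrt{4p-\kappa_p^2}/\kappa_p$ has degree at most $2$ over $\Q$, a rational $\theta_p/\pi=r/s$ would force $s\in\{1,3,4,6,8,12\}$, and a short check excludes each of these for primes $p\equiv 1\pmod 3$. With irrationality in hand, the paper then uses convergents of the continued fraction of $\theta_p/(2\pi)$ (respectively $\theta_p/\pi$, selecting odd numerators) to produce infinitely many $k$ with $\cos(k\theta_p)$ arbitrarily close to $1$ (respectively $-1$), which is a sharper and more explicit version of your equidistribution remark. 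To complete your proof you would need to supply either this irrationality argument or an equivalent one (for instance, the splitting of $p$ in the imaginary quadratic field $\Q(\pi_p)$, which forces $(\pi_p)\ne(\overline{\pi_p})$ and hence $\pi_p/\overline{\pi_p}\notin O_K^\times$).
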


\begin{proof}
Since  $\gcd(n,\frac{q-1}{3})=1$, by the definition of $\epsilon_1$ and $\epsilon_2$, it is not possible that $\epsilon_1=\epsilon_2=-2$, then $-1\le \epsilon_1+\epsilon_2\le 2$. So, the inequalities in (\ref{caso3}) follow from $|\pi_p^k|=p^{k/2}=\sqrt q$. 

In order to prove the inequalities in (\ref{caso3_1}), we observe that, in the case that the characteristic is congruent to $2\pmod 3$, then  $\pi_p=i\sqrt p$ and the bounds are achieved alternately  for every even number $k$.
Thus, it remains to consider the case when the characteristic is congruent to $1 \pmod 3$. Since
$\pi_p=\sqrt p e^{i\theta_p}$,  where  $\theta_p=\arctan\left(\frac{\sqrt{4p-k_p^2}}{k_p}\right),$
then $\pi_p^k+\overline{\pi_p}^k= 2 \sqrt q \cos (k\theta_p)$. 

If $\frac {\theta_p}{\pi}=\frac rs$ is a rational number,  and since $\tan(\frac{r\pi}s)$ has algebraic degree $\le 2$ over $\Q$, then it follows from Theorem 3.11 in \cite{Niv} that $s\in \{1,3,4,6,8,12\}$.
However, it is easy to verify that  the equation $\frac{\sqrt{4p-a^2}}a=\tan(\frac{\pi r}s)$ with $a\in\Z$, $s\in \{1,3,4,6,8,12\}$,  $1\le r<s$, $\gcd(r,s)=1$ and $p\equiv 1\pmod 3$ does not have integer solutions.  So, it follows that  $\frac {\theta_p}{\pi}$ is an irrational number.

Now, considering  any convergent $\frac{m_l}{n_l}$ of the continued  fraction of  $\frac {\theta_p}{2\pi}$, we know that it  satisfies the inequality $|\frac {\theta_p}{2\pi}-\frac{m_l}{n_l}|<\frac 1{ n_l^2}$ (see \cite[Theorem 164]{HWHSW}),  then $n_l\theta_p=2\pi m_l+ \frac {\delta_l}{n_l}$ with $|\delta_l|< 2\pi$. Therefore
$$\pi_p^k+\overline{\pi_p}^k= 2 \sqrt q \cos \left(n_l\theta_p\right)=2\sqrt q\cos\left(\frac{\delta_l}{n_l}\right)> \sqrt q\left(2-\frac{\delta_l^2}{n_l^2}\right). $$

In the same way, considering the  convergent $\frac{m_l}{n_l}$ of the continued fraction of $\frac {\theta_p}{\pi}$,  and using the fact that $|m_ln_{l+1}-m_{l+1}n_l|=1$ (see \cite[Theorem 150]{HWHSW}),  thus having at least one of the  numbers $m_l$, $m_{l+1}$  odd, we have that  there exist infinitely many integers $l$ such that $m_l$ is odd and
 $$\pi_p^k+\overline{\pi_p}^k=2 \sqrt q \cos \left(n_l\theta_p\right)=2\sqrt q\cos\left(m_l\pi+ \frac{\delta_l}{n_l}\right)=-2\sqrt q\cos\left(\frac{\delta_l}{n_l}\right)< -\sqrt q\left(2-\frac{\delta_l^2}{n_l^2}\right),$$
where $|\delta_l|<\pi$.  These last two inequalities and Theorem \ref{item772} implied the inequalities in (\ref{caso3_1})  and then  the result  in the Equation (\ref{caso3}) is asymptotically sharp.$\hfill\qed$
\end{proof}

		\begin{example}Let $p=73$,  $\Fq$ be a finite field with $p^k$ elements and $n=35$.
		By definition, $\kappa_{73}=7$ and $\pi_{73}=\frac{-7+i \sqrt{243}}{2}$. Thus, if $\gcd(35,\frac{73^k-1}{3})=1$, then Theorem \ref{item772} states that the number of elements $a\in\F_{73^k}$ for which the polynomial $x^{35}(x^{\frac{73^k-1}{3}}+a)$ permutes $\F_{73^k}$ is given by
		
		$$T_k=\frac{2\cdot 73^k-16-2(\frac{-7+i \sqrt{243}}{2})^{\,k}-2(\frac{-7-i \sqrt{243}}{2})^{\,k}}{9}.$$
		
		In particular, $T_1=16$. In fact, using SageMath  software (\cite{sage}) is easy to verify that 
		$x^{59}$, $x^{35}(x^{24}+2)$, $x^{35}(x^{24}+4)$, $x^{35}(x^{24}+16)$, $x^{35}(x^{24}+18)$, $x^{35}(x^{24}+21)$, $x^{35}(x^{24}+22),$
		$ x^{35}(x^{24}+30)$, $x^{35}(x^{24}+32)$,  $x^{35}(x^{24}+33)$, $x^{35}(x^{24}+37)$, $x^{35}(x^{24}+45)$, $x^{35}(x^{24}+55)$, $x^{35}(x^{24}+57)$,
		$ x^{35}(x^{24}+68)$ and  $x^{35}(x^{24}+71)$ are all the permutation binomials of that form.

In addition,  for $k_1=1217$ and $k_2=1578$  we have that
$$ \frac {p^{k_1}-\frac 92 T_{k_1}}{p^{k_1/2}}>1.999998451823\quad\text{and}\quad \frac {p^{k_2}-\frac 92 T_{k_2}}{p^{k_2/2}}< -1.99999906282.$$
	\end{example}

Finally, we note that Corollary \ref{asinto3} improves  the constant $M_3=20$ in Theorem \ref{teoMasuda} to the value  2,   so, a natural question is how improve to $M_r$ for any $r\ge 4$.


\begin{thebibliography}{99}		
	\bibitem{Akb} 
		A. Akbary and W. Qiang, \textit{A Generalized Lucas Sequence and Permutation Binomials}, Proc. Amer. Math. Soc. 134 (2006), pp. 15-22.
		
	\bibitem{ref2} 
		S. Bhattacharya, S. Sarkar, \textit{On Some Permutation Binomials and Trinomials over $\F_{2^n}$}, Designs, Codes and Cryptography, vol. 82, Issue 1-2 (2017), pp. 149-160.	
				
				\bibitem{Bhat} 
		S. Bhattacharya, S. Sarkar, and A. \c{C}e\c{s}melio\u{g}lu \textit{On Some Permutation Binomials of the Form $x^{\frac{2^n-1}{k}+1} + ax$ over $\F_{2^n}$ : Existence and Count}, Arithmetic of Finite Fields, Lecture Notes in Computer Science, vol 7369 (2012), pp. 236-246.
		
		
			\bibitem{Carl} 
		L. Carlitz, \textit{Some Theorems on Permutation Polynomials},  Bull. Amer. Math. Soc.
		vol 68 (2) (1962), pp. 120-122.		
	
		\bibitem{apl1} 
		W. Chu and S. W. Golomb, \textit{Circular Tuscan-$k$ Arrays From Permutation Binomials}, J. Comb.
		Theory A 97 (2002), 195-202.		

		\bibitem{apl4} 
		C. J. Colbourn, T. Klove and A. C. H. Ling, \textit{Permutation Arrays for Powerline Communication and Mutually Orthogonal Latin Squares}, IEEE Trans. Inf. Theory 50 (2004), pp. 1289-1291
		
					\bibitem{apl3} 
		J. F. Dillon and H. Dobbertin, \textit{New Cyclic Difference Sets with Singer Parameters}, Finite
		Fields Appl. 10 (2004), pp. 342-389

	
		\bibitem{Dic} L. E. Dickson,  \textit{The Analytic Representation of Substitutions on a Power of a
			Prime Number of Letters with a Discussion of the Linear Group}, Annals of
		Math, 11 (1896), pp. 65-120.
	
	\bibitem{HWHSW}G. H. Hardy, E. M. Wright, R. Heath-Brown, J. Silverman, A. Wiles, \textit{An introduction to the Theory of Numbers} (6th ed.). Oxford Science Publications, 2008. 
		
\bibitem{Her} 
		C. Hermite, \textit{Sur Les Fonctions de Sept Lettres}, C. R. Acad. Sci. Paris 57 (1863), pp. 750-757.

		
	\bibitem{Kim} 
		S. Y. Kim and J. B. Lee, \textit{Permutation Polynomials of the Type $x^{\frac{q-1}{m}+1} + ax$}, Commun. Korean Math. Soc. 10 (1995), pp. 823-829.	

		
		\bibitem{ref3} 
		K. Li, L. Qu, X.Chen, \textit{New Classes of Permutation Binomials and Permutation Trinomials over Finite Fields}, Finite Fields and Their Applications, vol. 43 (2017), pp. 69-85.
		
		\bibitem{LiNi}R. Lidl and H. Niederreiter, \textit{Finite Fields}, vol. 20, Cambridge University Press,	1997.	
		
			\bibitem{MaZi1} A. Masuda and M. Zieve, \textit{Permutation Binomials over Finite Fields}, Transactions
		of the American Mathematical Society, 361 (2009), pp. 4169-4180.	
		
		\bibitem{Rob} 
		H. Niederreiter and K. H. Robinson, \textit{Complete Mappings of Finite Fields}, J. Australian Math. Soc. Ser. A, vol. 33 (1982), pp. 197-212.
	
            \bibitem{Niv} 
     I.  Niven,  \textit{Irrational Numbers}, Carus Math. Monographs 11, MAA, 1956.
              
\bibitem{PaLe} Y. H. Park and J. B. Lee, \textit{Permutation Polynomials and Group Permutation Polynomials}, Bull.
Austral. Math. Soc. 63 (2001), 67-74.
				
\bibitem{sage}
\emph{{S}ageMath, the {S}age {M}athematics {S}oftware {S}ystem ({V}ersion
  7.5.1)}, The Sage Developers, 2017, \url{https://www.sagemath.org}.
		
		\bibitem{Smal1} 
	C. Small, \textit{Permutation Binomials}, Internat. J. Math. \& Math. Sci. 13 (1990), pp. 337-342.
				
		
		\bibitem{Sil} J. H. Silverman, \textit{The Arithmetic of Elliptic Curves}, vol. 106, Springer Science \&  Business Media, 2009.	
		
			\bibitem{apl2} 
		J. Sun, O. Y. Takeshita and M. P. Fitz, \textit{Permutation Polynomial Based Deterministic Interleavers for Turbo Codes}, Proceedings 2003 IEEE International Symposium on Information
		Theory (Yokohama, Japan, 2003), 319.
		
		\bibitem{Wan10} 
		D. Wan, \textit{Permutation Binomials over Finite Fields}, Acta Math. Sinica (New Series) 10 (1994), pp. 30-35.
			
			\bibitem{DWan} 
		D. Wan, \textit{The Number of Permutation Polynomials of the Form $f(x) + cx$ over a Finite Field}, Proceedings of the	Edinburgh Mathematical Society (Series 2), vol 38 (1993), pp. 133-149.
		
	
		\bibitem{WaLi} D. Wan and R. Lidl, \textit{ Permutation Polynomials of the Form $x^r f (x^{(q-1)/d}  )$ and Their Group Structure}, Monatshefte f\"ur Mathematik, 112 (1991), pp. 149-163.
			
		\bibitem{Wang} 
		L. Wang, \textit{On Permutation Polynomials}, Finite Fields Appl. 8 (2002), pp. 311-322.
			
		\bibitem{ref1} 
		Xing-D. Hou, S. D. Lappano, \textit{Determination of a Type of Permutation Binomials over Finite Fields}, Journal of Number Theory, vol. 147 (2015), pp. 14-23.
		
	\bibitem{Zie} M. Zieve, \textit{On some permutation polynomials over $\F_q$ of the form $x^r h(x^{(q-1)/d})$}.
Proc. Amer. Math. Soc. 137 (2009), 2209-2216. 
		


	\end{thebibliography}
\end{document}